\newtheorem{theorem}{Theorem}[section]
\newtheorem{corollary}[theorem]{Corollary}
\newtheorem{definition}[theorem]{Definition}
\newtheorem{lemma}[theorem]{Lemma}
\newtheorem{proposition}[theorem]{Proposition}
\newtheorem{remark}[theorem]{Remark}
\def\11{\textbf{$1$}}
\begin{document}

\title{Weak-local derivations and homomorphisms on C$^*$-algebras}

\author[Ben Ali Essaleh]{Ahlem Ben Ali Essaleh}
\email{ahlem.benalisaleh@gmail.com}
\address{Faculte des Sciences de Monastir, Département de Mathématiques, Avenue de L'environnement, 5019 Monastir, Tunisia}
\curraddr{Departamento de An{\'a}lisis Matem{\'a}tico, Facultad de
Ciencias, Universidad de Granada, 18071 Granada, Spain.}

\author[Peralta]{Antonio M. Peralta}
\address{Departamento de An{\'a}lisis Matem{\'a}tico, Facultad de
Ciencias, Universidad de Granada, 18071 Granada, Spain.}
\curraddr{Visiting Professor at Department of Mathematics, College of Science, King Saud University, P.O.Box 2455-5, Riyadh-11451, Kingdom of Saudi Arabia.}
\email{aperalta@ugr.es}

\author[Ram\'{i}rez]{Mar{\'\i}a Isabel Ram{\'\i}rez}
\address{Departamento Matem\'aticas, Universidad de
Almer\'ia, 04120 Almer\'ia, Spain} \email{mramirez@ual.es}

\thanks{Authors partially supported by the Spanish Ministry of Science and Innovation,
D.G.I. project no. MTM2011-23843, Junta de Andaluc\'{\i}a grant FQM375. Second author partially supported by the Deanship of Scientific Research at King Saud University (Saudi Arabia) research group no. RG-1435-020. The first author acknowledges the partial financial support from the IEMath-GR program for visits of young talented researchers, and from the Higher Education And Scientific Research In Tunisia project UR11ES52: Analyse, Géométrie et Applications.}

\subjclass[2000]{Primary  47B47; 46L57; 47B44; 47B48; 47B49;  Secondary 15A86; 47L10. }


\begin{abstract} We prove that every weak-local derivation on a C$^*$-algebra is continuous, and the same conclusion remains valid for weak$^*$-local derivations on von Neumann algebras. We further show that weak-local derivations on C$^*$-algebras and weak$^*$-local derivations on von Neumann algebras are derivations. We also study the connections between bilocal derivations and bilocal $^*$-automorphism with our notions of extreme-strong-local derivations and automorphisms.
\end{abstract}

\keywords{}

\maketitle
 \thispagestyle{empty}

\section{Introduction and preliminaries}

A derivation of a Banach algebra $A$ into a Banach $A$-bimodule $X$ is a linear mapping $D: A\to X$ satisfying $$D(a b) = D(a) b + a D(b),$$ for every $a,b\in A$. When $A$ is a C$^*$-algebra, the set Der$(A,X)$ of all derivations of $A$ into $X$ is a closed subspace of the space $B(A,X)$ of all bounded linear operators from $A$ into $X$ (cf. \cite{Ringrose72}). When the set Der$(A,X)$ is regarded as a subspace of $L(A,X)$, the space of all linear maps from $A$ into $X$, it satisfies a strong stability property. Recalling a definition frequently used in the literature (see \cite{KOPR2014} or \cite{BattMol}), we shall say that a subset $\mathcal{D}$ of the Banach space $B(X,Y)$, of all bounded linear operators from $X$ into $Y$, is called \emph{algebraically reflexive} (respectively, \emph{topologically reflexive}) in $B(X,Y)$ when it satisfies the property:
\begin{equation}\label{eq reflexivity} T\in B(X,Y) \hbox{ with } T(x)\in \mathcal{D} (x), \ \forall x\in X \Rightarrow T\in \mathcal{D},
\end{equation}
(respectively,
\begin{equation}\label{eq topological reflexivity} T\in B(X,Y) \hbox{ with } T(x)\in \overline{\mathcal{D} (x)}^{^{\|.\|}}, \ \forall x\in X \Rightarrow T\in \mathcal{D}).
\end{equation}
We shall say that ${\mathcal D}$ is \emph{algebraically reflexivity} (respectively, \emph{topologically reflexive}) in the space $L(X,Y)$, of all linear mappings from $X$ into $Y$, if
\begin{equation}\label{eq reflexivity2} T\in L(X,Y) \hbox{ with } T(x)\in \mathcal{D} (x), \ \forall x\in X \Rightarrow T\in \mathcal{D},
\end{equation}
respectively,
\begin{equation}\label{eq topological reflexivity2} T\in L(X,Y) \hbox{ with } T(x)\in \overline{\mathcal{D} (x)}^{^{\|.\|}}, \ \forall x\in X \Rightarrow T\in \mathcal{D}).
\end{equation}

When in \eqref{eq topological reflexivity} and \eqref{eq topological reflexivity2}, the norm closure of $\mathcal{D} (x)$ is replaced with the closure with respect to another topology $\tau$ on $Y$ we  say that $\mathcal{D}$ is $\tau$-topologically reflexive in $B(X,Y)$ or in $L(X,Y)$.\smallskip

Clearly, $\mathcal{D}$ is topologically reflexive in $B(X,Y)$ or in $L(X,Y)$ whenever it is algebraically reflexive. Some known examples of algebraically and topologically reflexive subsets include the following:\begin{enumerate}[$\checkmark$]
\item The space Der$(M,X)$ of all  derivations of a von Neumann algebra $M$ into a dual $M$-bimodule $X$ is algebraically reflexive in $B(M,X)$ (R.V. Kadison, 1990 \cite{Kad90});
\item For an infinite dimensional separable Hilbert space $H$, the set of automorphisms on the Banach algebra $B(H),$ is algebraically reflexive in $L(H)$ (D.R. Larson, A.R. Sourour, 1990 \cite{LarSou} and M. Bre\v{s}ar and P. \v{S}emrl  \cite[Theorem 2]{BreSemrl95});
\item For a C$^*$-algebra $A$, the space of derivations on $A$ is algebraically reflexive in $B(A)$ (V. Shul'man, 1994 \cite{Shu});
\item For a separable infinite-dimensional complex Hilbert space $H$, the $^*$-automorphism group and the isometry group of the type I factor $B(H)$ are topologically reflexive (C. Batty, L. Moln\'{a}r, 1996 \cite{BattMol});
\item The space Der$(A,X)$ of all  derivations from a C$^*$-algebra $A$ into a Banach $A$-bimodule $X$ is algebraically reflexive in $L(A,X)$ (B.E. Johnson, 2001 \cite{John01});
\item The space $\hbox{Der}_{t} (M)$ of all triple derivations on a JBW$^*$-triple $M$ is algebraically reflexive in $B(M)$ (M. Mackey, 2013 \cite{Mack});
\item The space $\hbox{Der}_{t} (E)$ of all triple derivations on a JB$^*$-triple $E$ is algebraically reflexive in $L(E)$ (M. Burgos, F.J. Fernández-Polo, A.M. Peralta\hyphenation{Peralta}, 2014 \cite{BurFerPe2014}).
\end{enumerate}

In \cite[\S 2]{Pop}, F. Pop shows an example of a local homomorphism from $M_2 (\mathbb{C})$ into $M_4 (\mathbb{C})$ which is not multiplicative (see also \cite[Example 3.13]{Pe2014}). It is also known that the group of automorphisms of a C$^*$-algebra need not be topologically reflexive, an example can be given in $C[0,1]$ (cf. \cite[page 415 and Theorem 5]{BattMol}).\smallskip

In this paper we introduce a property, which is stronger than the property of being algebraically reflexive, and weaker the the property of being topologically reflexive. We shall show that many of the previous examples satisfying the algebraic reflexivity also satisfy the new stronger property.

\begin{definition}\label{def tau-local derivations} Let $X$ and $Y$ be a Banach spaces, and let $\tau$ be a locally convex topology on $Y$ defined by a family of seminorms $\big\{ \||.|\|_i : i\in I\Big\}$. An operator $T\in B(X,Y)$ will be said to be $\tau$-locally in $\mathcal{D}$ if \begin{equation}\label{eq tau-locally in D} \hbox{for each $x\in X$ and each $i\in I$, there exists $D_{x,i}\in \mathcal{D}$,}
\end{equation} $$\hbox{ depending on $x$ and $i$, such that $\||T(x) -D_{x,i} (x)|\|_i =0.$}$$\vspace{0.1mm}

A subset $\mathcal{D}$ of the Banach space $B(X,Y)$, of all bounded linear operators from $X$ into $Y$, will be called \emph{$\tau$-algebraically reflexive} in $B(X,Y)$ when every operator $T\in B(X,Y)$ being $\tau$-locally in $\mathcal{D}$ belongs to $\mathcal{D}$. We can consider a similar definition replacing $B(X,Y)$ with the space $L(X,Y)$, of all linear mappings from $X$ into $Y$.
\end{definition}

It is clear that every algebraically reflexive subset $\mathcal{D}$ in $B(X,Y)$ or in $L(X,Y)$ is $\tau$-algebraically reflexive. And $\mathcal{D}$ being  $\tau$-algebraically reflexive implies that $\mathcal{D}$ is $\tau$-topologically reflexive.\smallskip

Let $A$ be a C$^*$-algebra. The symbol $S(A)$ will denote the set of states on $A$, (i.e. the set of all norm-one, positive functionals in $A^*$). Given a positive functional $\phi\in S(A)$ we consider two seminorms on $A$: $$\||a|\|_{\phi} = \phi (a^* a)^{\frac12},\hbox{ and } |a|_{\phi}:=|\phi (a)| \ (a\in A).$$ We shall pay special attention in the following cases: and let $\tau_1$ and $\tau_2$ be the topologies on $A$ given by the families $\{  |\cdot|_{\phi} : \phi\in S(A)\}$ and $\{\||\cdot|\|_{\phi} : \phi\in S(A) \}$. Clearly, $\tau_1$ is the weak topology of $A$ and $\tau_2$ coincides with the restriction to $A$ of the strong topology of $A^{**}$ (cf. \cite[Definition 1.8.6]{Sak}).\smallskip

Given a von Neumann algebra $M$, with predual $M_*$, we shall write $S_{n} (M)$ for the set of all normal states on $M$ (i.e. the set of all norm-one, positive functionals in $M_*$). We recall \cite[Definitions 1.8.6 and 1.8.7]{Sak} that the \emph{strong topology} (respectively, the \emph{strong$^*$ topology}) of $M$ is the locally convex topology on $M$ defined by the family $\{\||\cdot|\|_{\phi} : \phi\in S_n(M) \}$ (respectively, $\{\||\cdot|\|_{\phi}, \||\cdot|\|^*_{\phi} : \phi\in S_n(M) \}$, where $\||a|\|^*_{\phi}  = \||a^*|\|_{\phi}$, $a\in A$). Following standard notation, the strong and the strong$^*$ topologies of $M$ are denoted by $s(M, M_*)$ and $s^*(M, M_*)$, respectively.\smallskip

When $\emph{D} = \hbox{Der}(A)\subset B(A),$ is the set of all derivations on $A$, a linear map $T: A\to A$ which is $\tau_1$-locally in $\hbox{Der}(A)$ will be called a \emph{weak-local derivation on $A$}. Further, when $\emph{D} = \hbox{$^*$-Aut}(A)\subset B(A),$ is the set of all $^*$-automorphisms of $A$, a linear map $T: A\to A$ which is $\tau_1$-locally in $\hbox{$^*$-Aut}(A)$ will be called a \emph{weak-local $^*$-automorphism on $A$}. We similarly define \emph{strong-local derivations}, \emph{strong-local $^*$-automorphisms}, \emph{weak-local {\rm(}$^*$-{\rm)} homomorphisms}, and \emph{strong-local {\rm(}$^*${\rm)}-homomorphisms} on $A$.\smallskip

Given $\phi \in S_n(M_*)$, where $M$ is a von Neumann algebra, by the Cauchy-Schwartz inequality $$|a|_{\phi}^2= |\phi (a)|^2 \leq \phi (a^* a) \phi (1^* 1) = \|| a |\|_{\phi}^2 \ \ \ {\rm(} a\in M {\rm ).}$$ Therefore every strong-local derivation (respectively, $^*$-automorphism) on a C$^*$-algebra is a weak-local derivation  (respectively, a $^*$-automorphism).\smallskip

Clearly, every local derivation (respectively, every local $^*$-automorphism) on $A$ is a \emph{$\tau$-local derivation} (respectively, a \emph{$\tau$-local $^*$-automorphism}) on $A$ for $\tau= \tau_1$ or $\tau_2.$ In this note we shall prove that every weak-local derivation on a C$^*$-algebra is a derivation, a result which extends the famous theorems of R.V. Kadison \cite{Kad90} and B.E. Johnson \cite{John01}.\smallskip

When $M$ is a von Neumann algebra, we can also consider the topologies $\tau_3$ and $\tau_4$ generated by the families $\{  |\cdot|_{\phi} : \phi\in S_n (M)\}$ and $\{\||\cdot|\|_{\phi} : \phi\in S_n(M) \}$, which clearly coincide with the weak$^*$ and the strong$^*$ topologies of $M$, respectively. We shall also consider \emph{weak$^*$- and strong$^*$-local derivations} and \emph{weak$^*$- and strong$^*$-local $^*$-automorphisms} on $M$. Clearly, every strong-local derivation (respectively, $^*$-automorphism) on a von Neumann algebra is a weak$^*$-local derivation  (respectively, $^*$-automorphism).\smallskip

In Section \ref{sec: automatic cont} we prove that every weak-local derivation on a C$^*$-algebra is continuous (Theorem \ref{thm cont weak local derivations}). Deeper arguments are needed to establish that every weak$^*$-local derivation on a von Neumann algebra is continuous (Theorem \ref{t automatic continuity weak*local derivation}). These results generalize classical results on automatic continuity derivations due to S. Sakai \cite{Sak60}, J.R. Ringrose \cite{Ringrose72}, and B.E. Johnson \cite{John01}. Among the new tolls developed in section, we show that every linear left-annihilator-preserving (respectively, \emph{right-annihilator-preserving}) on a von Neumann algebra is continuous, and hence a left multiplier (see Corollary \ref{c prop automatic cont left-annihilator-preserving on  von Neumann}).\smallskip

The main results established in Section \ref{sec: weak local derivations} prove that the space of derivations on a von Neumann algebra $M$ (respectively, on a C$^*$-algebra $A$) is weak$^*$-algebraically reflexive in $L(M)$ (respectively, weak-algebraically reflexive in $L(A)$) (see Theorems \ref{t weak*-local derivations are derivations} and \ref{t weak-local derivations are derivations}, respectively).\smallskip

Section \ref{sec: strong-local automorphims on von Neumann algebras} is devoted to prove that every strong-local $^*$-automorphism on a von Neumann algebra is a Jordan $^*$-homomorphism (Theorem \ref{t strong$^*$-local *-automorphisms}).\smallskip

The concepts studied in this paper also admits some connections with more recent contributions. In \cite{ZhuXiong97},  C. Xiong and J. Zhu introduced the notion of bilocal derivation on $B(H)$ in the following sense: a linear map $T: B(H)\to B(H)$ is a \emph{bilocal derivation} if for every $a\in B(H),$ and every $\xi\in H$, there exists a derivation $D_{a,\xi} : B(H)\to B(H)$, depending on $a$ and $\xi$, such that $\|T(a) (\xi) - D_{a,\xi} (a) (\xi)\|=0.$ Clearly, we can restrict to the case $\|\xi\|=1.$

Inspired by the above notion we define here extreme-$\tau$-local derivations and automorphisms. A linear mapping $T: M \to M$ is said to be an extreme-weak$^*$-local derivation {\rm(}respectively, an extreme-strong$^*$-local derivation{\rm)} if for every $a\in M$, and every pure normal state $\phi\in \partial_{e} (S_n(M))$, there exists a derivation $D_{a,\phi}: M\to M,$ depending on the elements $a$ and $\phi$, such that $\Big|\phi \Big(T(a) - D_{a,\phi} (a)\Big)\Big|=0,$ {\rm(}respectively, $\||T(a) - D_{a,\phi} (a)|\|_{\phi }=0${\rm)}. Extreme-weak$^*$-local $^*$-automorphism and extreme-strong-local $^*$-automorphism are similarly defined. Bilocal derivations on $B(H)$ are precisely the extreme-strong$^*$-local derivations on $B(H)$ (see Remark \ref{reamrk bilocal derivations are extreme strong* derivations}). We prove here that every (linear) extreme-weak$^*$-local derivation on an atomic von Neumann algebra is continuous (Theorem \ref{t automatic continuity extreme-weak*local derivation}). Improving a result of C. Xiong and J. Zhu \cite[Theorem 3]{ZhuXiong97}, we further show that every extreme-weak$^*$-local derivation on an atomic von Neumann algebra is a derivation (Theorem \ref {t extreme-weak*-local derivations are derivations}).\smallskip

In 2014, L. Moln\'{a}r introduced and studied bilocal $^*$-automorphisms on $B(H)$. Concretely, a linear mapping $T: B(H) \to B(H)$ is said to be a \emph{bilocal $^*$-automorphism} if for every $a$ in $B(H)$ and every $\xi$ in $H$, there exists a $^*$-automorphism $\pi_{a,\xi}: B(H)\to B(H)$, depending on $a$ and $\xi$, such that $T(a) (\xi) = \pi_{a,\xi} (a) (\xi)$ (cf. \cite{Mol2014}). Bilocal $^*$-automorphisms and extreme-strong-local $^*$-automorphisms on  $B(H)$ define the same applications. In \cite[Theorem 1]{Mol2014}, L. Moln\'{a}r establishes that for a linear transformation $T: B(H)\to B(H)$, where $H$ is an infinite dimensional and separable complex Hilbert space, $T$ is a bilocal $^*$-automorphism if and only if $T$ is a unital algebra $^*$-endomorphism. We prove in this note that every extreme-strong-local $^*$-automorphism on an atomic von Neumann algebra is a Jordan $^*$-homomorphism (Theorem \ref{t extreme-strong-local *-automorphisms}). As a consequence, we establish that the conclusion of Molnár's theorem is also valid for arbitrary Hilbert spaces.

\section{Automatic continuity of weak-local derivations}\label{sec: automatic cont}

Throughout this paper, given a Banach space $X$, the symbols $X_1$ and $S_{X}$ will denote closed unit ball and the unit sphere of $X$, respectively. The self-adjoint part of a C$^*$´-algebra $A$ will be denoted by $A_{sa}.$\smallskip

We recall that a linear mapping  $R: X\longrightarrow X$ on Banach space $X$ is said to be \emph{dissipative}, if for every $(x,\;\phi)\in S_X\times S_{X^*}$ with
$\phi(x)=1,$ we have $\Re\hbox{e}(\phi T(x))\leq 0.$ It is known that every dissipative linear map on a Banach space is continuous (compare \cite[Proposition 3.1.15]{BraRo}).\smallskip

Let $A$ and $B$ be C$^*$-algebras, and let $T: A\to B$ be a linear mapping. We define $T^{\sharp} : A\to B$ the linear mapping defined by $T^{\sharp} (a) := T(a^*)^*$ ($a\in A$). We shall say that $T$ is symmetric when $T^{\sharp} = T.$  A derivation $D$ on a C$^*$-algebra $A$ is said to be a $^*$-derivation when it is a derivation and a symmetric map (i.e., $D(a^*) = D(a)^*,$ for every $a\in A$). Let us observe that $D^{\sharp}$ is a derivation whenever $D:A\to A$ is.\smallskip

It is originally due to S. Sakai that every derivation on a C$^*$-algebra is continuous \cite{Sak60}. Some years later, A. Kishimoto proves in \cite[Corollary, page 27]{Kishi76} that every $^*$-derivation on a C$^*$-algebra $A$ is dissipative, and hence continuous (see also \cite[\S 1.4]{Bra1986}). Given a general derivation $D$ on $A$, we can write $D = D_1 + i D_2,$ where $D_1 = \frac12 (D+ D^{\sharp})$ and $D_2 = \frac{1}{2 i} (D- D^{\sharp})$ are $^*$-derivations on $A$. Therefore, the previous result of Kishimoto also assures that every derivation on a C$^*$-algebra is continuous.\smallskip

J.R. Ringrose establishes in \cite{Ringrose72} that every derivation from a C$^*$-algebra $A$ into a Banach $A$-bimodule is continuous, and B.E. Johnson extended the above results in \cite[Theorem 7.5]{John01} by showing that every local derivation of a C$^*$-algebra $A$ into a Banach $A$-bimodule is continuous.\smallskip

Here we consider the automatic continuity of weak-local derivations on a C$^*$-algebra.

\begin{theorem}\label{thm cont weak local derivations}
Every weak-local derivation on a C$^*$-algebra is continuous.
\end{theorem}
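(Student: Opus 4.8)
The plan is to show that a weak-local derivation $T$ on a C$^*$-algebra $A$ is dissipative after a suitable self-adjoint decomposition, and then invoke the classical fact that dissipative linear maps are automatically continuous. First I would reduce to the symmetric case: writing $T = T_1 + i T_2$ with $T_1 = \frac12(T + T^{\sharp})$ and $T_2 = \frac{1}{2i}(T - T^{\sharp})$, I would check that $T^{\sharp}$ is again a weak-local derivation (using that $D \mapsto D^{\sharp}$ maps $\mathrm{Der}(A)$ into itself and that $\phi \mapsto \phi^{\sharp}$, $\phi^{\sharp}(a) := \overline{\phi(a^*)}$, permutes the state space), so each $T_j$ is a symmetric weak-local derivation; it then suffices to prove continuity of a symmetric weak-local derivation.

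So assume $T = T^{\sharp}$. The core step is the dissipativity estimate. Fix $a \in S_A$ and a state $\phi \in S(A)$ with $\phi(a) = 1$; I want $\mathrm{Re}\,\phi(T(a)) \le 0$. By the weak-local hypothesis applied to the pair $(a,\phi)$ there is a derivation $D = D_{a,\phi}$ on $A$ with $\phi(T(a)) = \phi(D(a))$. Now the point is that for a $^*$-derivation the corresponding estimate is known (Kishimoto): a $^*$-derivation is dissipative, so $\mathrm{Re}\,\psi(D(x)) \le 0$ whenever $\psi(x) = 1 = \|x\| = \|\psi\|$. The subtlety is that $D$ need not be symmetric, and $T(a) = D(a)$ only after pairing with the single functional $\phi$, not as elements of $A$. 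To handle this I would decompose $D = D_1 + i D_2$ into $^*$-derivations as above; each $D_j$ is dissipative, hence $\mathrm{Re}\,\phi(D_j(a)) \le 0$ because $\phi$ is a state, $a \in S_A$, and $\phi(a)=1$. Combining, $\mathrm{Re}\,\phi(D(a)) = \mathrm{Re}\,\phi(D_1(a)) \le 0$ provided one also controls the imaginary-part contribution — here I use that $\phi(a)=1$ forces, via Cauchy--Schwarz in the GNS representation of $\phi$, that $a$ behaves like a unit-implementing element: $\phi(a^*a) = \phi(aa^*) = 1$, so $\phi(D_2(a))$ is real (because $D_2(a)$ is ``self-adjoint at $\phi$'': $\phi(D_2(a)^*) = \overline{\phi(D_2(a))}$ together with symmetry of $D_2$ and the state property give $\phi(D_2(a)) \in \mathbb{R}$), whence $i\phi(D_2(a))$ contributes nothing to the real part. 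Therefore $\mathrm{Re}\,\phi(T(a)) = \mathrm{Re}\,\phi(D(a)) \le 0$, so $T$ is dissipative and hence continuous by \cite[Proposition 3.1.15]{BraRo}.

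I expect the main obstacle to be exactly this last reconciliation: we only know $\phi(T(a)) = \phi(D_{a,\phi}(a))$ as scalars for one state at a time, while dissipativity of $D_{a,\phi}$ is a statement about all states, so one must be careful that the single available identity together with the state conditions on $(a,\phi)$ suffices. The resolution is that dissipativity of a $^*$-derivation evaluated at the \emph{specific} pair $(a,\phi)$ — which is all we need — is a clean consequence of Kishimoto's theorem, and the positivity/reality facts about $\phi(D_j(a))$ follow from $\phi$ being a state and $a$ a norm-one element with $\phi(a)=1$ via the Cauchy--Schwarz inequality $|\phi(xy)|^2 \le \phi(xx^*)\phi(y^*y)$. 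A secondary technical point, if $A$ is non-unital, is to pass to the unitization $\widetilde{A}$: every state of $A$ extends to a state of $\widetilde A$, every derivation of $A$ extends (uniquely) to a derivation of $\widetilde A$, and continuity descends, so one may assume $A$ is unital from the start, which is convenient for the Cauchy--Schwarz manipulations.
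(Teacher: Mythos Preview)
Your strategy --- reduce to $T=T^\sharp$ and then establish dissipativity via Kishimoto's theorem on $^*$-derivations --- is precisely the paper's. The gap is in the dissipativity conclusion itself: you verify $\mathrm{Re}\,\phi(T(a))\le 0$ only for \emph{states} $\phi$ with $\phi(a)=1$, but dissipativity of $T$ on $A$ requires this for every $\phi\in S_{A^*}$. For many $a\in S_A$ (e.g.\ $a$ skew-adjoint) there is no state with $\phi(a)=1$, so your condition is vacuous there and gives no control over $T(a)$; since the weak-local hypothesis is formulated only for states, you cannot simply enlarge the class of functionals either.

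The paper avoids this by passing, after the symmetric reduction, to the real Banach space $A_{sa}$ and showing that $T|_{A_{sa}}:A_{sa}\to A_{sa}$ is dissipative, which already forces continuity of $T$. This restriction also dissolves your delicate reality step: for $a=a^*$ the $^*$-derivation $D_2$ gives $D_2(a)\in A_{sa}$, so $\phi(D_2(a))\in\mathbb{R}$ is automatic and one gets $\phi(T(a))=\phi(D(a))=\mathrm{Re}\,\phi(D(a))=\phi\big(\tfrac{D+D^\sharp}{2}\big)(a)\le 0$ straight from Kishimoto. Your GNS/Cauchy--Schwarz heuristic for $\phi(D_2(a))\in\mathbb{R}$ when $a\notin A_{sa}$ is incomplete as written (from $D_2=D_2^\sharp$ you only get $\phi(D_2(a^*))=\overline{\phi(D_2(a))}$, so you still owe $\phi(D_2(a-a^*))=0$); it \emph{can} be made rigorous --- $\pi_\phi(a-a^*)\xi_\phi=0=\pi_\phi((a-a^*)^*)\xi_\phi$ and every derivation is inner in $A^{**}$ --- but restricting to $A_{sa}$ renders the whole detour unnecessary.
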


\begin{proof} Let $T: A\to A$ be a weak-local derivation on a C$^*$-algebra. Let us write $T = T_1 + i T_2,$ where $T_1 = \frac12 (T+ T^{\sharp})$ and $T_2 = \frac{1}{2 i} (T- T^{\sharp})$. We claim that $T_1$ and $T_2$ are weak-local derivations on $A$. Indeed, by hypothesis, given $a\in A$ and $\phi\in S(A)$ there exist derivations $D_{a,\phi}, D_{a^*,\phi}: A\to A$ such that $\phi T(a) = \phi D_{a,\phi}(a)$ and $\phi T(a^*) = \phi D_{a^*,\phi}(a^*)$. Since $\phi\in S(A),$ we have $\overline{\phi (T(a^*)^*)} = \phi T(a^*) = \phi D_{a^*,\phi}(a^*)$. Therefore $$\phi T_1 (a)  =  \frac12 \left(\phi T(a) + \phi (T(a^*)^*) \right)= \frac12 \left(\phi D_{a,\phi} (a) + \overline{\phi (D_{a^*,\phi} (a^*))} \right) $$ $$= \frac12 \left(\phi D_{a,\phi} (a) + \phi D_{a^*,\phi}^{\sharp} (a) \right) = \phi \left(\frac{D_{a,\phi} +  D_{a^*,\phi}^{\sharp} }{2}\right)  (a),$$ and similarly, $$\phi T_2 (a)  = \phi \left(\frac{D_{a,\phi} -  D_{a^*,\phi}^{\sharp} }{2i}\right)  (a),$$ which proves the claim.\smallskip

The arguments in the above paragraph shows that we can assume $T=T^{\sharp}$ is a symmetric mapping. We shall show that $T|_{A_{sa}} : A_{sa} \to A_{sa}$ is dissipative. Let $(a,\phi)$ in $S_{A_{sa}}\times S_{A_{sa}^*}$ with $\phi (a) = 1$. By hypothesis, there exists a derivation $D_{a,\phi}: A\to A$ such that $\phi T(a) = \phi D_{a,\phi}(a).$ Having in mind that $a=a^*$ and $\phi\in A_{sa}^*$, we deduce that $$\phi D_{a,\phi}^{\sharp} (a) = \phi (D_{a,\phi} (a^*)^*) = \overline{\phi (D_{a,\phi} (a))} .$$ Thus, $$\phi T(a) = \phi D_{a,\phi} (a) = \frac12 (\phi D_{a,\phi} (a) +  \overline{\phi (D_{a,\phi} (a))}) = \phi \left(\frac{D_{a,\phi} + D_{a,\phi}^{\sharp}}{2} \right) (a) \leq 0,$$ because $\frac{D_{a,\phi} + D_{a,\phi}^{\sharp}}{2}$ is a $^*$-derivation on $A$. This shows that $T|_{A_{sa}}$ is dissipative as desired.
\end{proof}

In the case of von Neumann algebra $M$ we can also consider the automatic continuity of a weak$^*$-local derivation on $M$. However this question is more difficult to answer due to the lacking of a result of automatic continuity for \emph{weak$^*$-dissipative maps} on dual Banach spaces. We shall illustrate this statement with the following example: Consider an unbounded linear mapping $T:\ell_1\to \ell_1$ satisfying that $T(e_n)=0,$ for every $n\in \mathbb{N}$, where $\{e_n: n\in \mathbb{N}\}$ is the Schauder basis of $\ell_1$. Let $a\in \ell_1= c_0^*$, $\varphi\in c_0$ be norm-one elements satisfying $\varphi(a) = 1$. Since $a$ is a norm attaining functional in $\ell_1= c_0^*$, it is well known that $a$ must be a finite linear combination of elements in the basis  $\{e_n: n\in \mathbb{N}\}$, and thus, $T(a) = 0$. This implies that $\varphi T(a) =0\leq 0.$ However $T$ is unbounded. \smallskip

We shall establish some technical results now. The next lemma is probably well known in the folklore of von Neumann algebras.

\begin{lemma}\label{l one} Let $p$ be a projection in a von Neumann algebra $M$. Suppose $a$ is an element in $M$ satisfying $\phi (a) = 0$ for every $\phi \in S_n (M)$ with $p\phi p = \phi$ {\rm(}i.e. $\phi (px p) = \phi (x)$ for every $x\in M${\rm)}. Then $p a p =0.$
\end{lemma}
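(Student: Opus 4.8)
The statement says: if $a\in M$ is annihilated by every normal state $\phi$ supported under $p$ (i.e. $p\phi p=\phi$), then $pap=0$. The plan is to reduce to the compressed algebra $pMp$, which is again a von Neumann algebra with unit $p$, and observe that the hypothesis concerns exactly the normal states of $pMp$. Indeed, a normal state $\psi$ of $pMp$ extends to a normal state $\phi$ of $M$ by $\phi(x):=\psi(pxp)$, and this $\phi$ satisfies $p\phi p=\phi$; conversely every normal state $\phi$ of $M$ with $p\phi p=\phi$ restricts to a normal state of $pMp$, and $\phi(x)=\phi(pxp)$ for all $x\in M$. Thus the hypothesis is equivalent to saying that $\psi(pap)=0$ for every normal state $\psi$ of the von Neumann algebra $pMp$.

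First I would note that the normal states of a von Neumann algebra separate its points: the predual $(pMp)_*$ is the closed linear span of the normal states (every normal functional is a linear combination of at most four normal positive functionals, by Jordan decomposition of its self-adjoint real and imaginary parts, followed by normalization), and the predual separates points of $pMp$ since $pMp$ is its dual. Hence if $\psi(pap)=0$ for every normal state $\psi$, then $f(pap)=0$ for every $f\in (pMp)_*$, and therefore $pap=0$.

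The only mild subtlety is the passage between normal states of $M$ supported under $p$ and normal states of $pMp$; this is a standard fact about compressions of von Neumann algebras (see, e.g., \cite[Section 1.8]{Sak} or \cite[1.8.6]{Sak} for the relevant normal-state/predual identifications), and I would simply invoke it. So the main (and essentially only) content is the separation property of normal states, which is immediate from the duality $(pMp)^* \supset (pMp)_* = \overline{\text{span}}\,S_n(pMp)$. No part of this is a real obstacle; the lemma is genuinely folklore, and the proof is two short paragraphs once the compression remark is made.
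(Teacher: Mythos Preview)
Your argument is correct and coincides with the paper's own proof: reduce to the hereditary von Neumann subalgebra $pMp$ (with predual $pM_*p$), observe that the hypothesis says every normal state of $pMp$ annihilates $pap$, and conclude $pap=0$ because normal states separate points. The paper simply packages the separation step as a citation to \cite[Lemma 1.7.2]{Sak}, whereas you spell it out via the Jordan decomposition; otherwise the two proofs are identical.
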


\begin{proof} Let us observe that $pMp$ is a hereditary von Neumann subalgebra of $M$ with predual $(pMp)_* =p M_* p$. The statement follows from \cite[Lemma 1.7.2]{Sak} applied to $pap$ and $pMp.$
\end{proof}

\begin{lemma}\label{l two} Let $T: M\to M$ be a weak$^*$-local derivation on a von Neumann algebra. Then the following statements hold:\begin{enumerate}[$(a)$]\item $T(1) =0;$
\item $T(p) = p T(p) (1-p) + (1-p) T(p) p,$ for every projection $p\in M.$
\end{enumerate}
\end{lemma}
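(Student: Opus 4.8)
The plan is to extract the two assertions from the defining property of a weak$^*$-local derivation applied to the unit and to projections, using only the elementary identities that a genuine derivation $D$ satisfies on these elements, namely $D(1)=0$ and $D(p)=pD(p)(1-p)+(1-p)D(p)p$ for a projection $p$. The key observation is that for each normal state $\phi$ the value $\phi(T(x))$ agrees with $\phi(D_{x,\phi}(x))$ for some derivation $D_{x,\phi}$, so whatever linear-functional identity holds for $D_{x,\phi}(x)$ also holds for $T(x)$ after pairing with $\phi$; since normal states separate points of $M$, this transfers operator identities from the derivation picture to $T$.

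For part $(a)$, fix $\phi\in S_n(M)$ and pick a derivation $D_{1,\phi}$ with $\phi(T(1))=\phi(D_{1,\phi}(1))$. Every derivation kills the unit, since $D(1)=D(1\cdot 1)=D(1)1+1D(1)=2D(1)$ forces $D(1)=0$; hence $\phi(T(1))=0$. As $\phi$ ranges over all of $S_n(M)$, which is a separating family for $M$, we conclude $T(1)=0$.

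For part $(b)$, fix a projection $p\in M$ and decompose $M$ into the four Peirce-type corners $pMp$, $pM(1-p)$, $(1-p)Mp$, $(1-p)M(1-p)$. I would like to show that the $pMp$- and $(1-p)M(1-p)$-components of $T(p)$ vanish, i.e.\ that $pT(p)p=0$ and $(1-p)T(p)(1-p)=0$; together these give exactly the stated formula. For the first of these, apply Lemma \ref{l one} to the element $a:=pT(p)p$: it suffices to check that $\phi(pT(p)p)=0$ for every $\phi\in S_n(M)$ with $p\phi p=\phi$. For such a $\phi$ we have $\phi(pT(p)p)=\phi(T(p))$, and choosing a derivation $D_{p,\phi}$ with $\phi(T(p))=\phi(D_{p,\phi}(p))$ reduces us to showing $\phi(D_{p,\phi}(p))=0$; but $D_{p,\phi}(p)=D_{p,\phi}(p^2)=D_{p,\phi}(p)p+pD_{p,\phi}(p)$, so $pD_{p,\phi}(p)p = pD_{p,\phi}(p)p+pD_{p,\phi}(p)p$, giving $pD_{p,\phi}(p)p=0$, and since $p\phi p=\phi$ we get $\phi(D_{p,\phi}(p))=\phi(pD_{p,\phi}(p)p)=0$. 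Hence $pT(p)p=0$ by Lemma \ref{l one}. The component $(1-p)T(p)(1-p)$ is handled symmetrically: the element $1-p$ is again a projection, $T(1-p)=T(1)-T(p)=-T(p)$ by part $(a)$ and linearity, and applying the argument just given with $1-p$ in place of $p$ yields $(1-p)T(1-p)(1-p)=0$, i.e.\ $(1-p)T(p)(1-p)=0$. Writing $T(p) = pT(p)p + pT(p)(1-p) + (1-p)T(p)p + (1-p)T(p)(1-p)$ and discarding the two vanishing corners gives $(b)$.

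The only mildly delicate point is the correct invocation of Lemma \ref{l one}, which requires one to recognize that the hypothesis "$\phi(a)=0$ whenever $p\phi p=\phi$" is precisely what the derivation identity furnishes for $a=pT(p)p$; everything else is the routine $D(p^2)=D(p)p+pD(p)$ computation together with the separating property of normal states. I do not anticipate a serious obstacle here, since part $(a)$ is used to reduce the $(1-p)$-corner computation to the $p$-corner one without any extra work.
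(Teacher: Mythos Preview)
Your proposal is correct and follows essentially the same route as the paper's proof: part $(a)$ uses that $D(1)=0$ for any derivation together with the separating property of normal states, and part $(b)$ uses Lemma~\ref{l one} with the derivation identity $D(p)=D(p)p+pD(p)$ to kill one diagonal corner, then swaps $p\leftrightarrow 1-p$ via $T(1)=0$ to kill the other. The only cosmetic difference is that the paper first proves $(1-p)T(p)(1-p)=0$ and then swaps, whereas you first prove $pT(p)p=0$; the arguments are interchangeable.
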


\begin{proof}
$(a)$ Let us recall that for every derivation $D$ on a unital C$^*$-algebra $D(1)=0.$ By assumptions, for each $\phi \in M_*,$ there exists a derivation $D_{1,\phi} :M\to M$ such that $\phi T(1) = \phi D_{1,\phi} (1) =0.$ Thus, the conclusion of $(a)$ follows from \cite[Lemma 1.7.2]{Sak}.\smallskip

$(b)$ Let us fix a projection $p\in M,$ and let us take $\phi\in S_n(M)$ such that $(1-p)\phi (1-p) =\phi$. We claim that $\phi T(p) =0.$ Indeed, by hypothesis, there exists a derivation $D_{p,\phi} :M\to M$ such that $$\phi T(p) = \phi D_{p,\phi} (p) = \phi (D_{p,\phi} (p) p + p D_{p,\phi} (p))$$ $$ =\phi ((1-p) D_{p,\phi} (p) p (1-p) + (1-p) p D_{p,\phi} (p)(1-p)) =0 ,$$ which proves the claim. Applying Lemma \ref{l one}, we deduce that $$(1-p) T(p) (1-p)=0.$$ Replacing $p$ with $1-p$ and having in mind that, by $(a)$, $T(1)=0$, we get $T(p) = p T(p) (1-p) + (1-p) T(p) p.$
\end{proof}

We observe that when $T:M\to M$ is a continuous linear mapping on a von Neumann algebra satisfying the conclusion in Lemma \ref{l two}$(b)$, then $T$ must be a derivation (cf. \cite[Theorem 2]{Bre92} or \cite[Proof of Theorem 2.1]{AyuKudPe2014}). However, we do not know yet whether every weak$^*$-local derivation on a von Neumann algebra is continuous.\smallskip

Let $X$ and $Y$ be Banach $A$-bimodules over a Banach algebra $A$. Given a subset $S\subset X$ the \emph{left-annihilator} (respectively, the \emph{right-annihilator}) of $S$ in $A$ is the set $$\hbox{Ann}_{l,A} (S)= \hbox{Ann}_{l} (S)  := \left\{ a\in A : a S = 0 \right\},$$ (respectively, $\hbox{Ann}_{r,A} (S)= \hbox{Ann}_{r} (S)  := \left\{ a\in A : S a = 0 \right\}$).\smallskip

We recall that a mapping $f: X\to Y$ is said to be a \emph{left-annihilator-preserving} (respectively, \emph{right-annihilator-preserving}) if $f(x) a=0$, whenever $x a=0$ (respectively, $a f(x)=0$, whenever $ a x=0$) with $a\in A$, $x\in X$. A linear map $T: A\to X$ is called a \emph{left (respectively, right) multiplier} if $T (ab) = T (a)b$ (respectively, $T (ab) = a T (b)$), for every $a,b\in A$. Clearly, every left (respectively, right) multiplier is a left-annihilator-preserving (respectively, a right-annihilator-preserving) mapping. J. Lin and Z. Pan proved in \cite[Theorem 2.8]{LiPan} that every bounded and linear left-annihilator-preserving (respectively, every bounded and linear right-annihilator-preserving) mapping from a unital C$^*$-algebra $A$ into a unital Banach $A$-bimodule is a  left multiplier (respectively, a right multiplier). Lemma 2.10 in \cite{AyuKudPe2014} shows that the same conclusion remains valid for bounded and linear left-annihilator-preserving (respectively, every bounded and linear right-annihilator-preserving) maps from a C$^*$-algebra $A$ into an essential Banach $A$-bimodule.\smallskip

In the conditions above, a linear mapping $L:A \to X$ is called a \emph{local left}  (respectively, \emph{right}) \emph{multiplier} if for every $a\in A$ there exists a left  (respectively, right) multiplier $T_{a}: A\to X$, depending on $a$, such that $L(a) = T_a (a)$. B.E. Johnson proved in \cite[Proposition 7.2]{John01} that in the case $A= C_0 (L)$, where $L$ is a locally compact topological space, every local multiplier, not assumed a priori to be continuous, from $A$ into a left Banach $A$-module is continuous. Our next result is a strengthened version of Johnson's result.

\begin{proposition}\label{p Johnson continuity left-annihilator-preserving on abelian von Neumann} Let $\mathcal{B}$ be a commutative von Neumann subalgebra of a von Neumann algebra $M$, and suppose that $\mathcal{B}$ contains the unit of $M$. Then every linear left-annihilator-preserving {\rm(}respectively, \emph{right-annihilator-preserving}{\rm)} $T: \mathcal{B} \to M$ is continuous.
\end{proposition}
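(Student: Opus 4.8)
The plan is to reduce the statement to an automatic continuity result via the closed graph theorem and a standard separating-space argument, and then to exploit the abundance of projections in a commutative von Neumann algebra.

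First I would treat the left-annihilator-preserving case; the right case is symmetric (or follows by passing to the opposite algebra). Let $\mathcal{S}$ be the \emph{separating space} of $T$, namely $\mathcal{S} = \{ y\in M : \text{there is a sequence } b_n\to 0 \text{ in } \mathcal{B} \text{ with } T(b_n)\to y \}$. By the closed graph theorem it suffices to show $\mathcal{S} = \{0\}$. The key observation is that $\mathcal{S}$ inherits an annihilator property from $T$: if $b\in\mathcal{B}$ satisfies $b c = 0$ for some $c\in\mathcal{B}$, then $T(b) c = 0$ by hypothesis; more usefully, for a projection $q\in\mathcal{B}$ one has $(1-q)\mathcal{B} q$-type relations. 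Concretely, I would show that for every projection $q\in\mathcal{B}$, the operator $b\mapsto T(b) q$ restricted to $(1-q)\mathcal{B}$ is identically zero, so a routine stability argument gives $\mathcal{S}\, q = 0$ whenever $q$ is a projection in $\mathcal{B}$ with $(1-q)\mathcal{B}$ "large" in the relevant sense. The cleanest route: since $\mathcal B$ is commutative, $T$ being left-annihilator-preserving forces $\mathcal S \subseteq \mathrm{Ann}_{r,M}\big((1-q)\mathcal B\big)$ for suitable projections $q$, and by intersecting over a well-chosen family of projections one squeezes $\mathcal S$ down to $\{0\}$.

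To make this work I would use the spectral/projection structure of the commutative von Neumann algebra $\mathcal{B}\cong C(\Omega)$ with $\Omega$ hyperstonean. Fix $y\in\mathcal{S}$ and a witnessing sequence $b_n\to 0$, $T(b_n)\to y$. For any projection $q\in\mathcal{B}$ and any $b\in\mathcal{B}$ with $bq=0$ (i.e. $b\in (1-q)\mathcal B$), we have $T(b_n(1-q))\to y$ as well, after replacing $b_n$ by $b_n(1-q)$ and noting $b_n(1-q)\to 0$; but $b_n(1-q) \cdot q = 0$, so $T(b_n(1-q))\, q = 0$ for all $n$, hence $yq = 0$. Thus $y q = 0$ for \emph{every} projection $q\in\mathcal B$ such that $y\in$ closure of $T((1-q)\mathcal B)$ along a null sequence — and in fact the argument shows $yq=0$ for every projection $q\in\mathcal B$ once we know $y\in\mathcal S$ via a null sequence that may be multiplied by $(1-q)$. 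Since $\sup$ of all projections in $\mathcal B$ is $1$ and $\mathcal B$ is a von Neumann algebra, the net $q_\lambda\uparrow 1$ of projections gives $y = y\cdot 1 = \mathrm{s^*\text{-}lim}\, y q_\lambda = 0$. Hence $\mathcal S = \{0\}$ and $T$ is continuous.

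The main obstacle I anticipate is the measurability/limiting step: one must be careful that replacing the witnessing null sequence $b_n$ by $b_n(1-q)$ still converges to $0$ in norm (it does, since $\|b_n(1-q)\|\le\|b_n\|$) and that $T(b_n(1-q))$ still converges to $y$ — this is \emph{not} automatic from $T(b_n)\to y$ unless we also control $T(b_n q)$. To fix this I would instead argue directly: show $T$ restricted to each corner $(1-q)\mathcal B$ has separating space contained in $\mathrm{Ann}_{r,M}(q)\cap M = Mq^\perp$ plus a controlled complement, and use that $\mathcal B$ is generated by finitely-valued (simple) elements, whose images under any left-annihilator-preserving map are completely determined by the images of the spectral projections via the multiplier relation $T(pq)=T(p)q$ forced on products of projections in $\mathcal B$. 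Combining the commutativity of $\mathcal B$ with Lin--Pan's theorem (\cite[Theorem 2.8]{LiPan}) applied on finite-dimensional unital subalgebras, together with the closed-graph argument to pass to the limit, closes the gap; the delicate point remains the uniform control of the separating space across the lattice of projections, which is where the hyperstonean structure of $\Omega$ (and hence order-continuity of the spectral calculus in $\mathcal B$) is essential.
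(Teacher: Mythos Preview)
Your separating-space framework is the right scaffold, but the argument has a genuine gap that you yourself identify and do not repair. The step ``replace $b_n$ by $b_n(1-q)$ and still have $T(b_n(1-q))\to y$'' is exactly where the proof breaks: since $T(b_n)=T(b_nq)+T(b_n(1-q))$ and you have no control over $T(b_nq)$, nothing forces $T(b_n(1-q))\to y$. Your proposed fix---Lin--Pan on finite-dimensional unital subalgebras followed by a limit---does not close this: on a finite-dimensional subalgebra $F$ the restriction $T|_F$ is automatically bounded, so Lin--Pan tells you $T|_F$ is a left multiplier, but you obtain no uniform bound on $\|T|_F\|$ as $F$ varies, and without uniformity no closed-graph or density argument can pass to the limit. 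The sentence ``uniform control of the separating space across the lattice of projections'' names the difficulty precisely, but nothing in the sketch supplies that control.

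What the paper does is manufacture exactly this uniformity via a gliding-hump lemma (stated just before the proof): for any sequence $(p_n)$ of nonzero mutually orthogonal projections in $\mathcal{B}$, the map $a\mapsto p_nT(a)$ is continuous for all but finitely many $n$. The point is that the left-annihilator-preserving hypothesis forces $p_nT(x)=T(p_nx)$ for every $x\in\mathcal{B}$; if infinitely many $p_nT$ were unbounded one could choose norm-one $x_n$ with $\|T(p_nx_n)\|>4^{2n}$, form the weak$^*$-convergent sum $z=\sum_n p_nx_n\in\mathcal{B}$ (here the von Neumann hypothesis on $\mathcal{B}$ is used), and obtain $\|T(z)\|\ge\|p_mT(z)\|=\|T(p_mx_m)\|>4^{2m}$ for every $m$, a contradiction. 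With this lemma in hand, the continuity ideal $I=\{a\in\mathcal{B}:aT\text{ is continuous}\}=\hbox{Ann}_{l,\mathcal{B}}(\sigma_M(T))$ is a weak$^*$-closed ideal of $\mathcal{B}$, so $\mathcal{B}=I\oplus J$ with $J=I^\perp$; if $J$ were infinite-dimensional it would contain an infinite orthogonal sequence of projections $p_n$ with each $p_nT$ unbounded, contradicting the lemma. Hence $J$ is finite-dimensional, $T|_J$ is bounded, and since $u_JT(x)=T(u_Jx)=T|_J(u_Jx)$ one gets $u_J\in I$, forcing $J=\{0\}$. The gliding-hump step is the substantive idea your proposal is missing.
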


Before proving the proposition we state a technical lemma inspired from \cite[Lemma 9.1]{Sinc76}.\smallskip

Let $(p_i)_i$ be a family of mutually orthogonal projections in a von Neumann algebra $M$. By \cite[page 30]{Sak}, the family $(p_i)_i$ is summable with respect to the weak$^*$-topology (and also with respect to the strong$^*$-topology) of $M,$ and $\displaystyle \sum_{i} p_i$ is another projection in $M$. A similar argument also shows that when $(x_i)_i$ is a bounded family of mutually orthogonal elements in $M,$ then it is summable with respect to the weak$^*$-topology of $M$ (and hence with respect to the strong$^*$-topology of $M$), and the limit  $\displaystyle \sum_{i} x_i$ is another element in $M.$

\begin{lemma}\label{l infinite sequence of m orthogo projections} Let $\mathcal{B}$ be a commutative von Neumann subalgebra of a von Neumann algebra $M$, and suppose that $\mathcal{B}$ contains the unit of $M$. Suppose $(p_n)$ is a sequence of non-zero mutually orthogonal projections in $\mathcal{B}$, and  $T: \mathcal{B} \to M$ is a linear left-annihilator-preserving {\rm(}respectively, \emph{right-annihilator-preserving}{\rm)}. Then $p_n T: \mathcal{B} \to M$, $a\mapsto p_n T(a)$ {\rm(}respectively, $ Tp_n: \mathcal{B} \to M$, $a\mapsto T(a) p_n ${\rm)} is continuous for all but a finite number of $n$.
\end{lemma}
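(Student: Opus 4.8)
The plan is to argue by contradiction, following the classical Sinclair-type ``splitting'' technique adapted to the annihilator-preserving setting. Suppose, for the left-annihilator-preserving case, that there were infinitely many indices $n$ for which $p_n T$ is discontinuous; passing to a subsequence, I may assume $p_n T$ is discontinuous for every $n$. Then for each $n$ I can choose $b_n \in \mathcal{B}$ with $\|b_n\| \leq 2^{-n}$ (or even a summable sequence of norms) such that $\|p_n T(b_n)\|$ is as large as I please, say $\|p_n T(b_n)\| \geq n$; moreover, since $p_n$ is a non-zero projection in the commutative algebra $\mathcal{B}$, I can first replace $b_n$ by $p_n b_n$ so that $b_n \in p_n\mathcal{B}$, because $p_n T(b_n) = p_n T(p_n b_n) + p_n T((1-p_n) b_n)$ and the left-annihilator-preserving property forces $p_n T((1-p_n)b_n) = 0$ (as $(1-p_n)b_n \cdot p_n = 0$ in the commutative algebra $\mathcal{B}$, hence $T((1-p_n)b_n)\, p_n = 0$, and multiplying on the left... ). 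Here I must be careful about the side: left-annihilator-preserving means $T(x)a = 0$ whenever $xa = 0$; so from $(1-p_n)b_n \cdot p_n = 0$ I get $T((1-p_n)b_n)\, p_n = 0$, which is a statement about right multiplication by $p_n$, not about $p_n T(\cdot)$. So instead I should work with $T p_n$ type expressions for the left case, or — more cleanly — exploit that the projections are mutually orthogonal to control cross terms. Let me restructure: set $b_n \in p_n \mathcal{B}$ with small norm and $\|T(b_n) p_n\|$ large in the left-preserving case (matching the ``$T p_n$'' formulation), and use orthogonality of the $p_n$.

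The key device is the following: since $(p_n)$ are mutually orthogonal projections in $\mathcal{B}\subset M$, and $(b_n)$ is a norm-summable sequence with $b_n \in p_n\mathcal{B}$, the $b_n$ are mutually orthogonal, and by the summability remark preceding the lemma the series $b := \sum_n b_n$ converges (in norm, in fact, since it is absolutely convergent) to an element of $\mathcal{B}$. Now I want to extract a contradiction from the single element $T(b)$. For any fixed $m$, write $b = b_m + c_m$ where $c_m = \sum_{n \neq m} b_n \in (1-p_m)\mathcal{B}$ (since each $b_n$ for $n\neq m$ lies in $p_n\mathcal{B} \subseteq (1-p_m)\mathcal{B}$). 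Then $c_m \cdot p_m = 0$, so by the left-annihilator-preserving property $T(c_m)\, p_m = 0$, whence $T(b)\, p_m = T(b_m)\, p_m$. Thus $\|T(b_m) p_m\| = \|T(b) p_m\| \leq \|T(b)\|$ for \emph{every} $m$ — but I arranged $\|T(b_m) p_m\| \geq m \to \infty$, a contradiction. So only finitely many $T p_n$ can be discontinuous; the stated lemma (for the bracketed right-preserving case) is the mirror image, obtained by the symmetric choice $b_n \in p_n\mathcal{B}$ with $\|p_m T(b_m)\|$ large and using $p_m \cdot c_m = 0 \Rightarrow p_m T(c_m) = 0$.

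I should double-check the discontinuity-to-large-values step: if $p_n T : \mathcal{B}\to M$ is unbounded (resp. $T p_n$ unbounded), then by definition of unboundedness there is a sequence in the unit ball on which its norm is unbounded, so after rescaling I can find a single $b_n$ with $\|b_n\|\leq 2^{-n}$ and $\|p_n T(b_n)\|$ (resp. $\|T(b_n)p_n\|$) arbitrarily large, e.g. $\geq n$; and prepending the projection via $b_n \rightsquigarrow p_n b_n$ is legitimate because the restriction of $a\mapsto p_n T(a)$ to $p_n\mathcal{B}$ is still unbounded — indeed $p_n T(a) = p_n T(p_n a)$ for all $a$, using $p_n T((1-p_n)a) = p_n T((1-p_n)a) = 0$? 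No: I need $(1-p_n) a$ to annihilate something. For the formulation with $p_n T$, I use the right-annihilator version of the hypothesis is not available; so for $p_n T$ I should run the \emph{right}-preserving branch of the argument. In other words, the cleanest bookkeeping is: left-annihilator-preserving $\Rightarrow$ $T p_n$ discontinuous for only finitely many $n$; right-annihilator-preserving $\Rightarrow$ $p_n T$ discontinuous for only finitely many $n$ — and this is exactly the pairing the lemma's parenthetical asserts.

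The main obstacle I anticipate is purely this matching of sides (left-annihilator-preserving controls right multiplication by projections, and conversely), which forces care in which series $\sum b_n$ to build and which face of the telescoping identity $T(b)\cdot p_m = T(b_m)\cdot p_m$ (or $p_m\cdot T(b) = p_m\cdot T(b_m)$) to invoke; once the sides are lined up, the argument is the short Sinclair-style contradiction sketched above, with the only technical input being the norm-convergence of $\sum_n b_n$ for a norm-summable sequence of mutually orthogonal elements, which is immediate, and the orthogonality relations $b_n \in p_n\mathcal{B}$, $c_m \in (1-p_m)\mathcal{B}$ in the commutative algebra $\mathcal{B}$.
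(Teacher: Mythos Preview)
Your argument is essentially the paper's own: argue by contradiction, choose elements with large images, localize them to $p_n\mathcal{B}$ via the annihilator-preserving hypothesis, sum the resulting mutually orthogonal series in $\mathcal{B}$, and bound each summand's image by $\|T(\text{sum})\|$. The paper first records the identity $p_nT(x)=T(p_nx)$ and then sums a \emph{bounded} orthogonal family $(p_nx_n)$ with $\|x_n\|=1$, invoking weak$^*$-summability in the von Neumann algebra $\mathcal{B}$; your choice $\|b_n\|\le 2^{-n}$ gives norm convergence directly and is a mild simplification, but the skeleton is identical.

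One correction on your side-matching discussion. Your analysis is sound: with the paper's definition (left-annihilator-preserving means $xa=0\Rightarrow T(x)a=0$), the hypothesis controls \emph{right} multiplication of $T(\cdot)$ by elements of $\mathcal{B}$, so the clean identity is $T(x)p_n=T(p_nx)$ and one concludes for $Tp_n$. However, your closing assertion that this is ``exactly the pairing the lemma's parenthetical asserts'' is false --- the lemma as stated pairs left-annihilator-preserving with $p_nT$ and right with $Tp_n$, the opposite of what you wrote. In fact the paper's own displayed identity $p_nT(x)=T(p_nx)$ strictly requires the right-annihilator-preserving hypothesis (from $p_n\cdot(1-p_n)x=0$ one needs $p_nT((1-p_n)x)=0$), so you have actually detected a harmless left/right slip in the paper rather than matched it. The mathematics is unaffected either way, since the two cases are mirror images and the downstream application only needs one of them.
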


\begin{proof} Let us take a sequence $(p_n)$ of non-zero mutually orthogonal projections in $\mathcal{B}$. Arguing by contradiction we can assume that $p_n T: \mathcal{B} \to M $ is unbounded for every $n.$ So, for each natural $n$, we can find a norm-one element $x_n$ in $\mathcal{B}$ such that $\|p_n T(x_n)\| > 4^{2n}.$ Let us observe that, since $T$ is a  linear left-annihilator-preserving, we have $$p_n T(x ) = p_n T( p_n x + (1-p_n) x) =  p_n T( p_n x )  + p_n T( (1-p_n) x) = p_n T(p_n x),$$ and $$T(p_n x) = p_n T(p_n x) + (1-p_n ) T(p_n x) = p_n T(p_n x),$$ which proves that \begin{equation}\label{eq trick for left annihilator preserving} p_n T(x ) = T(p_n  x ),
 \end{equation}for every $n$ and every $x\in \mathcal{B}$. Thus, $$\| T (p_n x_n ) \| = \|p_n T(x_n)\| > 4^{2n},$$ for every natural $n.$\smallskip

Since the elements in the sequence $(p_n x_n)$ are mutually orthogonal because $\mathcal{B}$ is commutative,  and $\|p_n x_n\|\leq 1$ for every $n,$ the series $\displaystyle z= \sum_{n=1}^{\infty} p_n x_n$ is weak$^*$- and strong$^*$- summable in the von Neumann algebra $\mathcal{B}$. In this case,  $$T(z) = T(p_m x_m ) + T\left( \sum_{n=1, n\neq m}^{\infty} p_n x_n \right) ,$$ which proves that $p_m T(z) =  T(p_m x_m ),$ for every natural $m$. Therefore, $$\| T(z) \| \geq \|p_m T(z) \| = \|T(p_m x_m)\| > 4^{2m},$$ for every natural $m,$ which is impossible.
\end{proof}

Let $T: X\to Y$ be a linear mapping between two normed spaces. Following \cite[page 7]{Sinc76}, the
\emph{separating space}, $\sigma_{_Y} (T)$, of $T$ in $Y$ is defined as the set of all $z$ in $Y$ for which there exists a sequence $(x_n) \subseteq X$ with $x_n \rightarrow 0$ and $T(x_n)\rightarrow z$. An application of the Closed Graph theorem shows that a linear mapping $T$ between two Banach spaces $X$ and $Y$ is continuous if and only if $\sigma_{_Y} (T) =\{0\}$.  It is also known that $\sigma_{_Y} (T)$ is a closed linear subspace of $Y.$ Consequently, for each bounded linear operator $R$ from $Y$ to another Banach space $Z$, the composition $R T$ is continuous if, and only if, $\sigma_{_Y}(T)\subseteq \ker (R)$.\smallskip

\begin{proof}[Proof of Proposition \ref{p Johnson continuity left-annihilator-preserving on abelian von Neumann}] Let us define $I := \hbox{Ann}_{l,B} (\sigma_{M} (T))$. It is easy to check that, since $\mathcal{B}$ is abelian, $I$ is a norm-closed ideal of $\mathcal{B}$. Furthermore, by the separate weak$^*$-continuity of the products in $\mathcal{B}$ and $M$, $I$ is weak$^*$-closed too.\smallskip

We claim that \begin{equation}\label{eq I is bounded products} I = \left\{ a\in \mathcal{B}: a T : \mathcal{B} \to M \hbox{ is continuous }\right\}.
 \end{equation}Indeed, for every element $a\in I$ we have $\sigma_{M} (T) \subseteq \ker (L_a)$, where $L_a : M \to M,$ $L_a (x) =a x$. So, the composition $L_a \circ T = a T : \mathcal{B}\to M$ is continuous. On the other hand, if the mapping $aT$ is continuous, for every element $b\in \sigma_{M} (T),$ there exists a sequence $a_n\to 0$ in norm such that $\|T(a_n) -b\|\to 0$. The continuity of $a T$ shows that $0=a T(a_n) \to a b$ in norm, which shows that $a b=0,$ witnessing the desired equality.\smallskip

Since $I$ is a weak$^*$-closed ideal of $\mathcal{B}$, we known that taking $J = I^{\perp} := \{c\in \mathcal{B} : c I = 0\}$, then $J$ is a weak$^*$-closed ideal in $\mathcal{B}$ and $\mathcal{B} = I\oplus J$. By hypothesis, $\mathcal{B}$ is a commutative von Neumann subalgebra of $M$,  so, it is well known that $\mathcal{B}$ is isometrically isomorphic to some $C(\Omega)$, where $\Omega$ is an Stonean space (\cite[Lemma 1.7.5]{Sak}). It is part of the folklore in Banach algebra theory that, in this case, there exists a clopen subset $\Gamma\subset \Omega$ such that $\Omega = \Gamma \stackrel{\circ}{\bigcup} (\Omega\backslash \Gamma),$ $I = \{b \in C(\Omega) : b|_{\Gamma} =0\} = C(\Omega\backslash \Gamma)$, and $J = \{b \in C(\Omega) : b|_{\Omega\backslash\Gamma} =0\} = C(\Gamma)$ (cf. \cite[Example 2.1.9]{BraRo}). We observe that $\Gamma$ and $\Omega\backslash \Gamma$ both are Stonean spaces.\smallskip

We claim that $\Gamma$ is finite. Otherwise, we can find a sequence $(p_n)$ of non-zero mutually orthogonal projections in the infinite-dimensional commutative von Neumann algebra $J= C(\Gamma).$ We note that, by \eqref{eq I is bounded products}, for each $a\in J\backslash\{0\}$, the mapping $a T : \mathcal{B} \to M$ is unbounded. Thus, $p_n T$ is unbounded for every natural $n,$ which contradicts Lemma \ref{l infinite sequence of m orthogo projections}. We have therefore shown that $\Gamma = \{t_1,\ldots,t_n\}$ is a finite set of isolated points in $\Omega$. Since $J$ is finite dimensional, $T|_{J} : J \to M$ must be continuous.\smallskip

Let $u_{I}$ and $u_{J}$ denote the unit elements in $I$ and $J$ respectively. Since $u_{I}\in I$, the mapping $u_I T $ is continuous. Furthermore, the arguments in the proof of Lemma \ref{l infinite sequence of m orthogo projections} \ref{eq trick for left annihilator preserving}, show that $u_{J} T(x) = T (u_{J} x) = T|_{J} (u_{J} x)$, for every $x\in \mathcal{B}$, therefore $u_{J} T$ is continuous, and hence $u_J\in I$. which proves that $J=\{0\}$ and $T =  u_{I} T$ is continuous.
\end{proof}

In \cite[Theorem 1.3]{Cuntz}, J. Cuntz proved a conjecture posed by J. R. Ringrose in \cite{Ringrose74}, showing that if $A$ is a C$^*$-algebra and $T$ is a linear mapping from $A$ into a Banach space $X$ such that the restriction of $T$ to the C$^*$-subalgebra of $A$ generated by a single hermitian element $h$ in $A$ is continuous, $T$ is bounded on the whole of $A$. A similar statement was established by Ringrose when $A$ is a von Neumann algebra in \cite{Ringrose74}. The following result is a direct consequence of the above Proposition \ref{p Johnson continuity left-annihilator-preserving on abelian von Neumann}, Cuntz theorem and \cite[Theorem 2.8]{LiPan} (see also \cite[Lemma 2.10]{AyuKudPe2014}).

\begin{corollary}\label{c prop automatic cont left-annihilator-preserving on  von Neumann} Every linear left-annihilator-preserving (respectively, \emph{right-annihilator-preserving}) on a von Neumann algebra is continuous, and hence a left multiplier. $\hfill\Box$
\end{corollary}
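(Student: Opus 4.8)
The plan is to reduce to the abelian situation already settled in Proposition \ref{p Johnson continuity left-annihilator-preserving on abelian von Neumann} by means of Cuntz's continuity criterion, and then to invoke the bounded version of the statement. Let $M$ be a von Neumann algebra and let $T: M\to M$ be a linear left-annihilator-preserving mapping; the right-annihilator-preserving case is handled by the symmetric argument, interchanging left and right products throughout.

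First I would fix a self-adjoint element $h\in M$ and let $\mathcal{B}_h$ denote the von Neumann subalgebra of $M$ generated by $h$ and the unit of $M$. Since $h=h^*$, the algebra $\mathcal{B}_h$ is commutative and contains the unit of $M$, so it falls within the scope of Proposition \ref{p Johnson continuity left-annihilator-preserving on abelian von Neumann}. Moreover, the restriction $T|_{\mathcal{B}_h}: \mathcal{B}_h\to M$ is again left-annihilator-preserving: if $a,x\in \mathcal{B}_h$ satisfy $x a=0$, then this relation holds in $M$, hence $T(x)\,a=0$ by hypothesis, and $T(x)=T|_{\mathcal{B}_h}(x)$. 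Proposition \ref{p Johnson continuity left-annihilator-preserving on abelian von Neumann} then gives that $T|_{\mathcal{B}_h}$ is continuous, and in particular so is the restriction of $T$ to the C$^*$-subalgebra of $M$ generated by the single hermitian element $h$, since that subalgebra is contained in $\mathcal{B}_h$.

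As this holds for every self-adjoint $h\in M$ and $M$ is a C$^*$-algebra, Cuntz's theorem \cite[Theorem 1.3]{Cuntz} (applied with the target space $M$ regarded merely as a Banach space) yields that $T$ is bounded on all of $M$. To conclude, I would observe that a von Neumann algebra is a unital C$^*$-algebra and a unital (hence essential) Banach bimodule over itself, so the bounded case \cite[Theorem 2.8]{LiPan} (see also \cite[Lemma 2.10]{AyuKudPe2014}) applies and shows that the bounded linear left-annihilator-preserving map $T$ is a left multiplier; in the right-annihilator-preserving case one obtains a right multiplier.

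I do not anticipate a genuine obstacle here: the substantial analytic content sits in Proposition \ref{p Johnson continuity left-annihilator-preserving on abelian von Neumann}, and what remains is only to note that the annihilator-preserving property passes to restrictions to subalgebras and to descend from the generated von Neumann algebra $\mathcal{B}_h$ to the C$^*$-algebra generated by $h$, so that Cuntz's theorem applies in its stated form. An alternative route, avoiding Cuntz's theorem, would apply Ringrose's continuity criterion for von Neumann algebras \cite{Ringrose74} to the same family of abelian subalgebras, but the C$^*$-version is more direct.
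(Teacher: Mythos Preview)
Your proposal is correct and follows exactly the route the paper indicates: Proposition \ref{p Johnson continuity left-annihilator-preserving on abelian von Neumann} gives continuity on each abelian von Neumann subalgebra $\mathcal{B}_h$, Cuntz's theorem \cite{Cuntz} (or, as you note, Ringrose's criterion \cite{Ringrose74}) then yields global continuity, and \cite[Theorem 2.8]{LiPan} supplies the left-multiplier conclusion. The paper states this corollary without a written proof, merely recording it as a direct consequence of these three ingredients; you have simply filled in the verification that the annihilator-preserving hypothesis passes to the restrictions $T|_{\mathcal{B}_h}$, which is the only point requiring a line of justification.
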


We would like to note that the above corollary guarantees that the continuity hypothesis in \cite[Theorem 2.8]{LiPan} and \cite[Lemma 2.10]{AyuKudPe2014} can be omitted in the setting in which $X$ and $A$ both coincide with a von Neumann algebra $M$. Corollary \ref{c prop automatic cont left-annihilator-preserving on  von Neumann} and Proposition \ref{p Johnson continuity left-annihilator-preserving on abelian von Neumann} also show that the conclusion of \cite[Proposition 7.2]{John01} also holds when $T: \mathcal{B}\to M$ is a linear left-annihilator-preserving instead of a local multiplier, where in this case $\mathcal{B}$ is a commutative von Neumann subalgebra of $M$ containing the unit in the latter algebra.\smallskip

Given a self-adjoint element $a$ in a von Neumann algebra $M.$ The \emph{range or support projection} of $a$ in $M$ is the smallest projection $p\in M$ satisfying $a p = p a = a$ (compare \cite[Definition 1.10.3]{Sak} and \cite[2.2.7]{Ped}). The range projection of $a$ in $M$ will be denoted by $r(a).$ It is known that the sequence $(a^{\frac1n})$ converges to $r(a)$ with respect to the strong$^*$-topology of $M$.\smallskip

We can establish now the second main result of this section. First we state an strengthened version of Lemma \ref{l two}.

\begin{lemma}\label{l two bis} Let $T: M\to M$ be a weak$^*$-local derivation on a von Neumann algebra. Suppose $a$ is a self-adjoint element in $M$ with range projection denoted by $r(a)$. Then $ (1-r(a)) T(a) (1-r(a)) = 0.$
\end{lemma}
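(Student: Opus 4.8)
The plan is to mimic the strategy used in Lemma \ref{l two}$(b)$, but applied to the range projection $r(a)$ rather than to a projection of the form we can freely replace by its complement. Fix a self-adjoint $a\in M$, write $r=r(a)$, and take any normal state $\phi\in S_n(M)$ with $(1-r)\phi(1-r)=\phi$, i.e. $\phi(x)=\phi((1-r)x(1-r))$ for all $x\in M$. By Lemma \ref{l one} (applied to the projection $1-r$) it suffices to show that $\phi(T(a))=0$ for every such $\phi$; indeed this gives $(1-r)T(a)(1-r)=0$.

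So the heart of the matter is to prove $\phi(T(a))=0$ for $\phi$ supported under $1-r$. Since $T$ is a weak$^*$-local derivation, there is a derivation $D_{a,\phi}:M\to M$ with $\phi(T(a))=\phi(D_{a,\phi}(a))$. Now I would use the defining property of the range projection: $a=r\,a\,r$, so $a=a r=r a$ and $(1-r)a=a(1-r)=0$. Applying the derivation identity,
\begin{equation*}
D_{a,\phi}(a)=D_{a,\phi}(a r)=D_{a,\phi}(a) r + a\, D_{a,\phi}(r),
\end{equation*}
and also $D_{a,\phi}(a)=D_{a,\phi}(r a)= D_{a,\phi}(r)\,a + r\,D_{a,\phi}(a)$. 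I want to extract the $(1-r)\cdot(1-r)$ corner of $D_{a,\phi}(a)$. Compressing the first identity by $1-r$ on the left and right: since $(1-r)a=0$, the term $(1-r)a\,D_{a,\phi}(r)(1-r)$ vanishes, so $(1-r)D_{a,\phi}(a)(1-r)=(1-r)D_{a,\phi}(a)r(1-r)=0$. Hence $\phi(D_{a,\phi}(a))=\phi((1-r)D_{a,\phi}(a)(1-r))=0$, and therefore $\phi(T(a))=0$, as required.

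The one technical point to be careful about is the interplay between $r(a)$ and the derivation $D_{a,\phi}$ — a general derivation need not send $r(a)$ to anything nice, and $r(a)$ is obtained as a strong$^*$-limit of $a^{1/n}$, not as a polynomial in $a$. This is exactly why I do \emph{not} try to evaluate $D_{a,\phi}(r)$ or appeal to continuity of $D_{a,\phi}$ on the C$^*$-subalgebra generated by $a$; instead I only use the purely algebraic identities $a r = r a = a$ together with the Leibniz rule, which hold regardless. This keeps the argument independent of the still-open question of whether weak$^*$-local derivations are continuous. The only other ingredient is Lemma \ref{l one}, already available, which converts the statement ``$\phi(T(a))=0$ for all $\phi$ below $1-r$'' into the operator identity $(1-r)T(a)(1-r)=0$; there the observation that $(1-r)M(1-r)$ is a hereditary von Neumann subalgebra with predual $(1-r)M_*(1-r)$ does the work.

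I do not anticipate a serious obstacle here: once one writes $a = ar = ra$ and applies Leibniz, the vanishing of the $(1-r)$-corner of $D_{a,\phi}(a)$ is essentially automatic, and the rest is Lemma \ref{l one}. If anything, the mild subtlety is simply remembering to compress on \emph{both} sides by $1-r$ and to use the correct one of the two Leibniz expansions so that the $a$-factor gets annihilated; both expansions in fact work symmetrically, so even this is not a real difficulty.
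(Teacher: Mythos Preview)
Your proof is correct and follows essentially the same approach as the paper: both pick $\phi\in S_n(M)$ supported under $1-r(a)$, use the factorization $a=r(a)a$ (the paper) or $a=ar(a)$ (you) together with the Leibniz rule to show $(1-r(a))D_{a,\phi}(a)(1-r(a))=0$, and then invoke Lemma~\ref{l one}. The only difference is cosmetic---which side you place $r(a)$ on---and you yourself note that both expansions work symmetrically.
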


\begin{proof}
Let us take $\phi\in S_n(M)$ such that $(1-r(a))\phi (1-r(a)) =\phi$. We claim that $\phi T(a) =0.$ Indeed, by hypothesis, there exists a derivation $D_{a,\phi} :M\to M$ such that $$\phi T(a) = \phi D_{a,\phi} (a)= \phi D_{a,\phi} (r(a) a) = \phi (D_{a,\phi} (r(a)) a + r(a) D_{a,\phi} (a))$$ $$ =\phi ((1-r(a)) D_{a,\phi} (r(a)) a (1-r(a)) + (1-r(a)) r(a) D_{a,\phi} (a)(1-r(a))) =0 ,$$ which proves the claim. Applying Lemma \ref{l one}, we deduce that $$(1-r(a)) T(a) (1-r(a)) = 0.$$
\end{proof}

\begin{theorem}\label{t automatic continuity weak*local derivation} Every (linear) weak$^*$-local derivation on a von Neumann algebra is continuous.
\end{theorem}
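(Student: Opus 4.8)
The plan is to reduce, via Cuntz's theorem, to the automatic continuity of the restriction of $T$ to the abelian von Neumann subalgebras of $M$ generated by a single hermitian element, and then to mimic the proof of Proposition~\ref{p Johnson continuity left-annihilator-preserving on abelian von Neumann}, using Lemma~\ref{l two bis} in the place where the proof of Lemma~\ref{l infinite sequence of m orthogo projections} used the left-multiplier identity \eqref{eq trick for left annihilator preserving}.

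First I would record two reductions. Arguing as in the reduction at the beginning of the proof of Theorem~\ref{thm cont weak local derivations}, with normal states in place of states, $T=T_1+iT_2$ with $T_1,T_2$ symmetric weak$^{*}$-local derivations, so one may assume $T=T^{\sharp}$; this is convenient because then $T(h)=T(h)^{*}$ for $h=h^{*}$, so the two off-diagonal corners of $T(h)$ are adjoints of one another. Secondly, by the theorem of J.~Cuntz recalled before Corollary~\ref{c prop automatic cont left-annihilator-preserving on  von Neumann}, $T$ is continuous on $M$ once its restriction to the $C^{*}$-subalgebra generated by each hermitian $h\in M$ is continuous; as that subalgebra sits inside $\mathcal W:=W^{*}(h,1_M)$, a commutative von Neumann subalgebra of $M$ containing $1_M$, it is enough to prove that $T|_{\mathcal W}:\mathcal W\to M$ is continuous for every such $\mathcal W$.

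For a fixed $\mathcal W$ I would repeat the separating-space argument of the proof of Proposition~\ref{p Johnson continuity left-annihilator-preserving on abelian von Neumann}: put $\sigma:=\sigma_{_M}(T|_{\mathcal W})$ and $I:=\{a\in\mathcal W:aT|_{\mathcal W}\text{ is continuous}\}=\operatorname{Ann}_{l,\mathcal W}(\sigma)$, a weak$^{*}$-closed ideal of the abelian algebra $\mathcal W$; then $\mathcal W=I\oplus J$ with $J=I^{\perp}$, and writing $\mathcal W\cong C(\Omega)$ with $\Omega$ Stonean one has $I\cong C(\Omega\backslash\Gamma)$, $J\cong C(\Gamma)$ for a clopen $\Gamma$. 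Since $1_M\in\mathcal W$, continuity of $T|_{\mathcal W}$ is equivalent to $\sigma=\{0\}$, i.e.\ to $\Gamma=\varnothing$. Assuming $\Gamma\neq\varnothing$, extract a sequence $(p_n)$ of non-zero mutually orthogonal projections in $J$ (if $\Gamma$ is finite, use a minimal projection and an analogous, simpler, estimate); none of the $p_n$ lies in $I$, so each $p_nT|_{\mathcal W}$ is unbounded. Now Lemma~\ref{l two bis} enters twice. Applied to the self-adjoint elements of $(1-p_n)\mathcal W$ (whose range projections are $\le 1-p_n$), it shows that $x\mapsto p_nT(x)p_n$ vanishes on $(1-p_n)\mathcal W$, hence $p_nT(x)p_n=p_nT(p_nx)p_n$; feeding this into the standard gliding hump (were $\sup_n\|x\mapsto p_nT(x)p_n\|$ infinite, a bounded orthogonal series $z=\sum_k p_{n_k}x_{n_k}$ would satisfy $\|p_{n_k}T(z)p_{n_k}\|=\|p_{n_k}T(p_{n_k}x_{n_k})p_{n_k}\|\to\infty$ while $\|T(z)\|<\infty$) yields that the diagonal corners $x\mapsto p_nT(x)p_n$ are \emph{uniformly} bounded. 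Applied to the relevant witnessing elements themselves, Lemma~\ref{l two bis} further localises the off-diagonal piece $p_nT(\cdot)(1-p_n)$, its adjoint $(1-p_n)T(\cdot)p_n$ being then determined by $T=T^{\sharp}$, and the range-projection part of the lemma confines these pieces to prescribed corners.

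The main obstacle is the elimination of the off-diagonal part. Since the diagonal corners are uniformly bounded, the unboundedness of $p_nT|_{\mathcal W}$ is carried by $x\mapsto p_nT(x)(1-p_n)$; but here the basic gliding hump breaks down, because the elements witnessing this unboundedness lie in the \emph{varying} subalgebras $(1-p_n)\mathcal W$ and cannot be chosen mutually orthogonal, so one cannot simply assemble them into a single $z$. To get around this I would (i) run auxiliary hump arguments against re-grouped orthogonal families — e.g.\ $\{p_{2n-1}+p_{2n}\}_n$, which controls the corners $p_{2n-1}T(\cdot)p_{2n}$ uniformly, and, more generally, families designed to pair up any prescribed pair of indices; (ii) exploit the range-projection localisation from Lemma~\ref{l two bis}, which makes the off-diagonal contributions of distinct summands live in corners orthogonal on one side, so that norms of sums reduce to norms of expressions of the form $\sum_n v_n^{*}v_n$ amenable to estimation; and (iii) if necessary, iterate the whole scheme inside the smaller abelian algebras $(1-p_n)\mathcal W$ carrying the residual unboundedness. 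I expect assembling this block-matrix bookkeeping into a contradiction with $\|T(z)\|<\infty$ — forcing $\Gamma=\varnothing$ — to be the technical heart of the proof; once this is done for every $\mathcal W=W^{*}(h,1_M)$, Cuntz's theorem completes the argument.
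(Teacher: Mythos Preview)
Your reduction via Cuntz's theorem to commutative von Neumann subalgebras $\mathcal W$ is correct, and you are right that Lemma~\ref{l two bis} is the key input. But the proof you sketch has a genuine gap: the ``technical heart'' you describe --- eliminating the off-diagonal corners $p_nT(\cdot)(1-p_n)$ via regrouped gliding humps and iteration --- is not carried out, and it is not clear that the scheme you propose actually terminates in a contradiction. The witnesses for the unboundedness of $p_nT|_{\mathcal W}$ live in $(1-p_n)\mathcal W$ and cannot be made mutually orthogonal, exactly as you note; your suggested fixes (pairing indices, range-projection localisation, recursion into $(1-p_n)\mathcal W$) do not obviously combine into a bound, and you yourself flag this as unfinished.

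The paper sidesteps this obstacle entirely by inserting an \emph{algebraic} step before the continuity argument. From Lemma~\ref{l two bis} one gets not just the diagonal vanishing you use, but the two-sided identity $aT(b)c=0$ whenever $ab=bc=0$ in $\mathcal B$ (equation~\eqref{eq orthogonal outsiders}). This identity makes the auxiliary maps $x\mapsto aT(bx)$ and $x\mapsto T(xz)-T(z)x$ into annihilator-preserving maps on $\mathcal B$, so Proposition~\ref{p Johnson continuity left-annihilator-preserving on abelian von Neumann} applies to \emph{them} (not to $T|_{\mathcal B}$ directly) and shows they are continuous multipliers. Unwinding, one obtains $T(xy)=T(x)y+xT(y)$ on $\mathcal B$: that is, $T|_{\mathcal B}$ is a \emph{derivation} of $\mathcal B$ into the bimodule $M$. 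Continuity of $T|_{\mathcal B}$ then comes for free from Ringrose's theorem \cite{Ringrose72}, with no off-diagonal bookkeeping needed. The idea you are missing is: do not attack continuity of $T|_{\mathcal W}$ head-on; first use Lemma~\ref{l two bis} to prove the derivation identity on $\mathcal W$, and let Ringrose hand you continuity.
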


\begin{proof}
Let $T: M\to M$ be a linear weak$^*$-local derivation on a von Neumann algebra. Let $\mathcal{B}$ denote a commutative von Neumann subalgebra of $M$ containing the unit of the latter algebra. Suppose $a,b$ and $c$ are elements in $\mathcal{B}$ with $a b = bc =0.$ We claim that \begin{equation}\label{eq orthogonal outsiders} a T(b) c =0.
\end{equation} Indeed, let $r(a)$, $r(b)$ and $r(c)$ denote the range projections of $a,b$ and $c$, respectively. Since $\mathcal{B}$ is commutative and $ab=0=bc$, we have $r(a)\leq 1-r(b) $ and $r(c)\leq 1-r(b).$ Lemma \ref{l two bis}, applied to $b+b^*$ and $i (b-b^*)$, and having in mind that $r(b+b^*) \leq r(b)$, and $r(i(b-b^*)) \leq r(b)$, we get $(1-r(b)) T(b+b^*) (1-r(b))=0 = (1-r(b)) T(b-b^*) (1-r(b)),$ so $(1-r(b)) T(b) (1-r(b))=0.$ Therefore, $$a T(b) c = a r(a) T(b) r(c) c = a (1-r(b)) T(b) (1-r(b)) r(c) c =0.$$

We can reproduce now part of the arguments in \cite[Theorem 2.8]{LiPan} and \cite[Lemma 2.10]{AyuKudPe2014}. Fix $a,b\in \mathcal{B}$ and define a linear mapping $L_{a,b} : \mathcal{B}\to M$, $L_{a,b} (x) = aT(bx).$ If we take $c,d\in \mathcal{B}$ with $cd=0$, by \eqref{eq orthogonal outsiders}, $L(c) d = aT(bc)d =0.$ Therefore, $L_{a,b}$ is a linear left-annihilator preserving, and hence, Proposition \ref{p Johnson continuity left-annihilator-preserving on abelian von Neumann} asserts that $L_{a,b}$ is continuous and a left-multiplier, that is, $L_{a,b} (x) = L_{a,b} (1) x,$ for every $x\in \mathcal{B}$.\smallskip

Let us fix $x\in \mathcal{B}$. Defining $R_x : \mathcal{B}\to M$, $R_{x} (z) = T(xz)-T(z)x$ is a linear mapping satisfying that $a R_{x} (b) = 0,$ for every $ab=0$ in $\mathcal{B}$. Therefore, $R_{x}$ is a linera right-annihilator preserving, and by Proposition \ref{p Johnson continuity left-annihilator-preserving on abelian von Neumann}, $R_{x}$ is a continuous right multiplier. Therefore, $$T(y x) -T(y) x = R_x (y) = y R_x (1) = y T(x),$$ for every $x,y\in \mathcal{B}$.\smallskip

We have therefore shown that $T|_{\mathcal{B}}: \mathcal{B}\to M$ is a derivation, whenever $\mathcal{B}$ is an abelian von Neumann subalgebra of $M$ containing the unit element of the latter algebra.  If we regard $M$ as a Banach $\mathcal{B}$-bimodule, Ringrose proves in \cite[Theorem 2]{Ringrose72} that $T|_{\mathcal{B}}$ is a bounded linear map. This shows that the restriction of $T$ to each maximal abelian $^*$-subalgebra $\mathcal{B}$ of $M$ is bounded. Finally, an application of \cite[Theorem 2.5]{Ringrose74} (see also \cite{Cuntz} for completeness) proves that $T$ is bounded on the whole of $M$.\end{proof}

In some particular cases, we can consider a weaker hypothesis than the one assumed in Theorem \ref{t automatic continuity weak*local derivation}. We recall that a von Neumann algebra $M$ is said to be \emph{atomic} if $M$ is C$^*$-isomorphic to a $\ell_{\infty}$-sum of von Neumann algebras of the form $B(H_i),$ where each $H_i$ is a complex Hilbert space. We remark that a von Neumann algebra $M$ is atomic (i.e. $\displaystyle M= \bigoplus^{\infty}_{i} B(H_i)$) if and only if $M$ coincides with the bidual of the compact C$^*$-algebra $\displaystyle A= \bigoplus^{c_0}_{i} K(H_i)$, $K(H_i)$ denotes the space of compact linear operators on $H_i$ (cf. \cite[\S 1.19]{Sak}).\smallskip

Given a von Neumann algebra $M$, we shall denote by $\partial_{e} (S_n(M))$ the pure normal states of $M$, that is the set of all extreme points in $S_n(M)$. We note that in general, $\partial_{e} (S_n(M))$ may be empty. When $M= A^{**}$, the Krein-Milman theorem asserts that $\partial_{e} (S_n(M))$ is non-empty and $\sigma (A^*,A)$-dense in $\left(M_*^{+}\right)_1$, however, it could happen, even in the commutative setting, that $\partial_{e} (S_n(M))$ does not separate the points in $M$. However, when $M$ is atomic, the pure normal states on $M$ separate the points in $M$.
Consequently, our next definition is only useful in the setting of atomic von Neumann algebras.

\begin{definition}\label{def extreme weak$^*$-local derivation} Let $M$ be von Neumann algebra. A linear mapping $T: M \to M$ is said to be an extreme-weak$^*$-local derivation {\rm(}respectively, an extreme-strong$^*$-local derivation{\rm)} if for every $a\in M$, and every pure normal state $\phi\in \partial_{e} (S_n(M))$, there exists a derivation $D_{a,\phi}: M\to M,$ depending on the elements $a$ and $\phi$, such that $\Big|\phi \Big(T(a) - D_{a,\phi} (a)\Big)\Big|=0,$ {\rm(}respectively, $\||T(a) - D_{a,\phi} (a)|\|_{\phi }=0${\rm)}.
\end{definition}

\begin{remark}\label{reamrk bilocal derivations are extreme strong* derivations} In \cite{ZhuXiong97},  C. Xiong and J. Zhu introduced the notion of bilocal derivation on $B(H)$. According to the their terminology, a linear map $T: B(H)\to B(H)$ is a \emph{bilocal derivation} if for every $a\in B(H),$ and every $\xi\in H$, there exists a derivation $D_{a,\xi} : B(H)\to B(H)$, depending on $a$ and $\xi$, such that $\|T(a) (\xi) - D_{a,\xi} (a) (\xi)\|=0.$ Clearly, we can restrict to the case $\|\xi\|=1.$\smallskip

Having in mind that $B(H)$ is an atomic von Neumann algebra, and the functional $\xi\otimes \xi : B(H)\to \mathbb{C},$ $a\mapsto \langle a(\xi), \xi \rangle$ is a pure normal state of $B(H)$, with $\|| a|\|_{\xi\otimes \xi} = (\xi\otimes \xi) (a^* a)^{\frac12} = \|a(\xi)\|$, we see that bilocal derivations on $B(H)$ are precisely the extreme-strong$^*$-local derivations on $B(H)$.
\end{remark}

\begin{theorem}\label{t automatic continuity extreme-weak*local derivation} Every (linear) extreme-weak$^*$-local derivation on an atomic von Neumann algebra is continuous. In particular, every bilocal derivation on $B(H)$ is continuous.
\end{theorem}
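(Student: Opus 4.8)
The plan is to reduce the problem to the abelian setting handled by Proposition \ref{p Johnson continuity left-annihilator-preserving on abelian von Neumann} together with the automatic continuity machinery of Cuntz and Ringrose, exactly as in the proof of Theorem \ref{t automatic continuity weak*local derivation}, but being careful that now the defining condition is only tested against pure normal states. So let $T:M\to M$ be an extreme-weak$^*$-local derivation on an atomic von Neumann algebra $M$, and fix a maximal abelian $^*$-subalgebra (masa) $\mathcal{B}$ of $M$ containing the unit. The first step is to establish the orthogonality identity: if $a,b,c\in\mathcal{B}$ satisfy $ab=bc=0$, then $aT(b)c=0$. In the proof of Theorem \ref{t automatic continuity weak*local derivation} this came from Lemma \ref{l two bis}, whose proof only needs a derivation $D_{a,\phi}$ with $\phi(T(a)-D_{a,\phi}(a))=0$ for states $\phi$ supported under $1-r(a)$; the point to check is that one can get away with using \emph{pure} normal states here, which works because $M$ is atomic and hence pure normal states separate the points of $(1-r(b))M(1-r(b))$, so Lemma \ref{l one} still applies (the hereditary subalgebra $(1-r(b))M(1-r(b))$ of an atomic von Neumann algebra is again atomic).

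Once the identity $aT(b)c=0$ for $ab=bc=0$ in $\mathcal{B}$ is in hand, the argument is identical to that of Theorem \ref{t automatic continuity weak*local derivation}: for fixed $a,b\in\mathcal{B}$ the map $L_{a,b}(x)=aT(bx)$ is a linear left-annihilator-preserving map $\mathcal{B}\to M$, hence continuous and a left multiplier by Proposition \ref{p Johnson continuity left-annihilator-preserving on abelian von Neumann}; then for fixed $x\in\mathcal{B}$ the map $R_x(z)=T(xz)-T(z)x$ is a continuous right-annihilator-preserving map, hence a right multiplier, which forces $T(yx)-T(y)x=yT(x)$ for all $x,y\in\mathcal{B}$, i.e. $T|_{\mathcal{B}}$ is a derivation into the Banach $\mathcal{B}$-bimodule $M$. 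By Ringrose's theorem \cite[Theorem 2]{Ringrose72}, $T|_{\mathcal{B}}$ is bounded. Since this holds for every masa $\mathcal{B}$ of $M$, an application of \cite[Theorem 2.5]{Ringrose74} (equivalently \cite{Cuntz}) shows $T$ is bounded on all of $M$. The particular case of $B(H)$ follows since $B(H)$ is atomic and, by Remark \ref{reamrk bilocal derivations are extreme strong* derivations}, every bilocal derivation on $B(H)$ is an extreme-strong$^*$-local derivation, hence an extreme-weak$^*$-local derivation.

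The step I expect to be the main obstacle is verifying that the analogue of Lemma \ref{l two bis} survives the weakening from all normal states to pure normal states. Concretely, the claim needed is: for a self-adjoint $a\in M$ with range projection $r(a)$, one has $(1-r(a))T(a)(1-r(a))=0$, knowing only that for each \emph{pure} normal state $\phi$ there is a derivation $D_{a,\phi}$ with $\phi(T(a))=\phi(D_{a,\phi}(a))$. The computation giving $\phi(T(a))=0$ whenever $(1-r(a))\phi(1-r(a))=\phi$ goes through verbatim (it only uses the Leibniz rule and $a=r(a)a=ar(a)$). To then conclude $(1-r(a))T(a)(1-r(a))=0$ I need the pure normal states supported under $1-r(a)$ to separate points of $(1-r(a))M(1-r(a))$; this is where atomicity of $M$ is essential and is the only place the hypothesis differs in strength from Theorem \ref{t automatic continuity weak*local derivation}. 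Everything downstream is then a transcription of the earlier proof.
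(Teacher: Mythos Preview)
Your proposal is correct and follows essentially the same route as the paper's own proof, which simply observes that on an atomic von Neumann algebra the pure normal states separate points, so that Lemmas \ref{l two} and \ref{l two bis} carry over verbatim to the extreme-weak$^*$ setting, and then reproduces the argument of Theorem \ref{t automatic continuity weak*local derivation}. You are in fact more careful than the paper at the one delicate point: to pass from $\phi(T(a))=0$ for all pure normal $\phi$ with $(1-r(a))\phi(1-r(a))=\phi$ to the conclusion $(1-r(a))T(a)(1-r(a))=0$, one needs that the pure normal states of the corner $(1-r(a))M(1-r(a))$ separate its points, which holds because corners of atomic von Neumann algebras are atomic; the paper leaves this implicit.
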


\begin{proof} Let $T:M\to M$ be an extreme-weak$^*$-local derivation on an atomic von Neumann algebra. Let us observe that the pure normal states on $M$ separate the points in $M$, so the conclusions of Lemma \ref{l two} remain true for any extreme-weak$^*$-local derivation on an atomic von Neumann algebra. In particular, for every projection $p\in M$, we have $T(p) = (1-p) T(p) p + p T(p) (1-p).$ In a similar fashion, we can prove that $(1-r(a)) T(a) (1-r(a)) = 0,$ for every $a\in M_{sa}$ (i.e. Lemma \ref{l two bis} also holds for $T$). We can therefore reproduce the proof of Theorem \ref{t automatic continuity weak*local derivation} to show that $T$ is continuous.\end{proof}

\begin{remark}\label{remark counterexample in the non atomic setting} Let $A$ be a general C$^*$-algebra satisfying that $A^{**}=M$ is non-atomic. We can decompose $M$ as a direct sum of its atomic part $M_1$ and its non-atomic part $M_2\neq 0$, which satisfies that $\phi|_{M_2} =0$, for every $\phi \in \partial_{e} (S_n(M)).$ We can also assume that $M_2$ is infinite dimensional. Take a derivation $D : M_1\to M_1$ and an unbounded linear mapping $T_2 : M_2\to M_2$ and consider the mapping $T: M\to M$ defined by $T(m_1+m_2):= D (m_1) + T_2 (m_2)$. Clearly, for each $\phi\in \partial_{e} (S_n(M)),$ we have $\phi T(m_1 +m_2) =\phi D(m_1)= \phi \widetilde{D} (m_1+m_2)$, where $\widetilde{D}: M\to M,$ $\widetilde{D} (m_1+m_2) = D(m_1)$ is a derivation on $M.$ Therefore, $T$ is an unbounded extreme-weak$^*$-local derivation, which shows that Theorem \ref{t automatic continuity extreme-weak*local derivation} doesn't hold for non-atomic von Neumann algebras.
\end{remark}

\section{Weak-local derivations}\label{sec: weak local derivations}

We have already commented that a continuous linear operator $T$ on a von Neumann algebra $M$ satisfying that $T(p) = p T(p) (1-p) + (1-p) T(p) p$, for every projection $p$ in $M$ is a derivation (compare \cite[Theorem 2]{Bre92} or \cite[Theorem 2.1]{AyuKudPe2014}). Combining this observation with Lemma \ref{l two}$(b)$ and Theorem \ref{t automatic continuity weak*local derivation} we obtain:

\begin{theorem}\label{t weak*-local derivations are derivations} Every weak$^*$-local derivation on a von Neumann algebra is a derivation. That is, the space of derivations on a von Neumann algebra is weak$^*$-algebraically reflexive. $\hfill\Box$
\end{theorem}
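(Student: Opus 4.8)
The plan is to assemble the statement from three ingredients already available in the excerpt, so the proof is essentially a citation-and-combination argument. Let $T: M\to M$ be a weak$^*$-local derivation on a von Neumann algebra $M$. First I would invoke Theorem~\ref{t automatic continuity weak*local derivation} to conclude that $T$ is continuous; this is the substantive input and is already established. Second, I would apply Lemma~\ref{l two}$(b)$, which gives $T(p) = pT(p)(1-p) + (1-p)T(p)p$ for every projection $p\in M$ — note this part of Lemma~\ref{l two} does not use continuity of $T$, only the weak$^*$-local derivation hypothesis, so it is available unconditionally.

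With continuity of $T$ in hand together with the projection identity, I would close the argument by citing the known fact (\cite[Theorem~2]{Bre92}, or equivalently \cite[Theorem~2.1]{AyuKudPe2014}) that a continuous linear map on a von Neumann algebra satisfying $T(p) = pT(p)(1-p) + (1-p)T(p)p$ for all projections $p$ is automatically a derivation. Since the linear span of the projections is norm-dense in a von Neumann algebra and $T$ is continuous, this identity is enough to force the derivation property; Bre\v{s}ar's theorem packages exactly that deduction. This yields $T\in\mathrm{Der}(M)$, and since $T$ was an arbitrary map weak$^*$-locally in $\mathrm{Der}(M)$, the space $\mathrm{Der}(M)$ is weak$^*$-algebraically reflexive in $L(M)$, which is the second assertion.

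I do not expect any real obstacle here: all the heavy lifting has been done in Section~\ref{sec: automatic cont}, where automatic continuity was the genuinely delicate point (requiring Proposition~\ref{p Johnson continuity left-annihilator-preserving on abelian von Neumann}, the separating-space machinery, and Ringrose--Cuntz reduction to maximal abelian subalgebras). The only thing to be mildly careful about is to confirm that Lemma~\ref{l two}$(b)$ is applied in its stated generality and that the cited Bre\v{s}ar/Ayupov--Kudaybergenov--Peralta result is quoted with the correct hypotheses (continuity plus the projection identity, no $^*$-preservation needed). Given that, the proof is two lines and is indeed the $\hfill\Box$-style argument the statement's placement suggests.

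\begin{proof}[Proof of Theorem~\ref{t weak*-local derivations are derivations}] Let $T: M\to M$ be a weak$^*$-local derivation on a von Neumann algebra $M$. By Theorem~\ref{t automatic continuity weak*local derivation}, $T$ is continuous. By Lemma~\ref{l two}$(b)$, $T(p) = p T(p) (1-p) + (1-p) T(p) p$ for every projection $p\in M$. Since $T$ is a continuous linear operator satisfying this identity on projections, \cite[Theorem~2]{Bre92} (see also \cite[Theorem~2.1]{AyuKudPe2014}) assures that $T$ is a derivation. As $T$ was an arbitrary operator lying weak$^*$-locally in $\mathrm{Der}(M)$, it follows that $\mathrm{Der}(M)$ is weak$^*$-algebraically reflexive in $L(M)$.
\end{proof}
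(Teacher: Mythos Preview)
Your proposal is correct and matches the paper's own argument essentially line for line: the paper combines Lemma~\ref{l two}$(b)$ with Theorem~\ref{t automatic continuity weak*local derivation} and then invokes \cite[Theorem~2]{Bre92} (or \cite[Theorem~2.1]{AyuKudPe2014}) to conclude, exactly as you do. The theorem is indeed presented in the paper as an immediate $\hfill\Box$-style consequence of the preceding paragraph, so there is nothing to add.
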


In the proof of Theorem \ref{t automatic continuity extreme-weak*local derivation}, we have shown that Lemma \ref{l two}$(b)$ is also true for any extreme-weak$^*$-local derivation on an atomic von Neumann algebra. Consequently we have:

\begin{theorem}\label{t extreme-weak*-local derivations are derivations} Every extreme-weak$^*$-local derivation on an atomic von Neumann algebra is a derivation.$\hfill\Box$
\end{theorem}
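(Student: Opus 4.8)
The plan is to reduce Theorem \ref{t extreme-weak*-local derivations are derivations} to the two ingredients that are already available: the automatic continuity statement for extreme-weak$^*$-local derivations (Theorem \ref{t automatic continuity extreme-weak*local derivation}) and the Breuer--Bre\v{s}ar type rigidity result asserting that a continuous linear operator $T$ on a von Neumann algebra $M$ with $T(p) = p T(p)(1-p) + (1-p) T(p) p$ for every projection $p\in M$ is automatically a derivation (\cite[Theorem 2]{Bre92} or \cite[Theorem 2.1]{AyuKudPe2014}). So the whole argument is a short assembly once the projection identity is in hand.

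First I would invoke Theorem \ref{t automatic continuity extreme-weak*local derivation} to conclude that the given extreme-weak$^*$-local derivation $T: M\to M$ on the atomic von Neumann algebra $M$ is continuous. Next I would record that, because $M$ is atomic, the pure normal states $\partial_e(S_n(M))$ separate the points of $M$; this is exactly the fact noted in the paragraph preceding Definition \ref{def extreme weak$^*$-local derivation}. Using this separation property together with the defining property of $T$, the verbatim proof of Lemma \ref{l two}$(b)$ goes through: for a fixed projection $p$ and a pure normal state $\phi$ with $(1-p)\phi(1-p) = \phi$, choosing the derivation $D_{p,\phi}$ given by the extreme-weak$^*$-local hypothesis yields $\phi T(p) = \phi D_{p,\phi}(p) = 0$ by the same Leibniz computation, hence by the analogue of Lemma \ref{l one} (valid because pure normal states separate points on an atomic $M$, so the hereditary subalgebra argument applies) one gets $(1-p)T(p)(1-p) = 0$; replacing $p$ by $1-p$ and using $T(1) = 0$ (the atomic analogue of Lemma \ref{l two}$(a)$) gives $T(p) = p T(p)(1-p) + (1-p)T(p)p$. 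In fact this is exactly what the proof of Theorem \ref{t automatic continuity extreme-weak*local derivation} already verified, so I would simply cite that.

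Finally, having both that $T$ is continuous and that $T$ satisfies the projection identity $T(p) = pT(p)(1-p) + (1-p)T(p)p$ for every projection $p\in M$, I would apply \cite[Theorem 2]{Bre92} (equivalently \cite[Theorem 2.1]{AyuKudPe2014}) to conclude that $T$ is a derivation, which is the assertion of the theorem. In short, the proof is: (i) continuity from Theorem \ref{t automatic continuity extreme-weak*local derivation}; (ii) the projection formula, already established inside that theorem's proof; (iii) the known rigidity theorem. Since every step is a citation of an earlier result, there is essentially no obstacle; the only point requiring mild care is confirming that Lemma \ref{l one} and Lemma \ref{l two} really do transfer to the extreme setting, and that rests entirely on the separation of points of $M$ by $\partial_e(S_n(M))$, which holds precisely because $M$ is atomic.
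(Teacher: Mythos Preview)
Your proposal is correct and follows essentially the same route as the paper: the paper's justification of Theorem \ref{t extreme-weak*-local derivations are derivations} is precisely the combination of (i) the continuity from Theorem \ref{t automatic continuity extreme-weak*local derivation}, (ii) the fact, recorded in that theorem's proof, that Lemma \ref{l two}$(b)$ holds for extreme-weak$^*$-local derivations on atomic von Neumann algebras (using that $\partial_e(S_n(M))$ separates points), and (iii) the rigidity theorem \cite[Theorem 2]{Bre92} (or \cite[Theorem 2.1]{AyuKudPe2014}). There is nothing to add.
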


In \cite[Theorem 3]{ZhuXiong97}, C. Xiong and J. Zhu prove that every bilocal derivation on $B(H)$ is a derivation. We have already seen in Remark \ref{reamrk bilocal derivations are extreme strong* derivations} that every bilocal derivation on $B(H)$ is an extreme-strong$^*$-local derivation, and hence a extreme-weak$^*$-local derivation on $B(H)$, so the result by Zhu and Xiong is a consequence of the above Theorem \ref{t extreme-weak*-local derivations are derivations}. Our theorem also shows that extreme-weak$^*$-local derivations, extreme-strong$^*$-local derivations (bilocal derivations) and derivations define the same linear operators on an atomic von Neumann algebra. The following result summarizes these ideas and generalizes \cite[Theorem 3]{ZhuXiong97}. \smallskip

\begin{corollary}\label{cor Xiong Zhu improved} Let $T: B(H) \to B(H)$ be a linear mapping, where $H$ is a complex Hilbert space. The following are equivalent:\begin{enumerate}[$(a)$] \item $T$ is a derivation;
\item $T$ is a local derivation;
\item $T$ is a extreme-strong$^*$-local derivation or a bilocal derivation {\rm(}equivalently, for each $a\in B(H)$ and each $\xi$  in $H$, there exists a derivation $D_{a,\xi}: B(H) \to B(H),$ depending on $a$ and $\xi$, such that $T(a) (\xi) = D_{a,\xi}(a) (\xi)${\rm)};
\item $T$ is a extreme-weak$^*$-local derivation {\rm(}equivalently, for each $a\in B(H)$ and each $\xi$  in $H$, there exists a derivation $D_{a,\xi}: B(H) \to B(H),$ depending on $a$ and $\xi$, such that $\langle T(a) (\xi) | \xi \rangle = \langle D_{a,\xi}(a) (\xi) | \xi\rangle${\rm)}. $\hfill\Box$
\end{enumerate}
\end{corollary}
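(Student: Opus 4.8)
The plan is to close a chain of implications $(a)\Rightarrow(b)\Rightarrow(c)\Rightarrow(d)\Rightarrow(a)$, most of which are immediate from the definitions once we identify the functional $\xi\otimes\xi$ with the pure normal state it induces. First I would observe that $(a)\Rightarrow(b)$ is trivial, since a derivation $T$ witnesses its own locality by taking $D_a=T$ for every $a$. For $(b)\Rightarrow(c)$ I would recall the identification already made in Remark \ref{reamrk bilocal derivations are extreme strong* derivations}: for a norm-one $\xi\in H$ the map $\xi\otimes\xi: a\mapsto\langle a(\xi),\xi\rangle$ is a pure normal state of $B(H)$ with $\||a|\|_{\xi\otimes\xi}=\|a(\xi)\|$, and conversely every pure normal state of $B(H)$ has this form. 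Hence if $T$ is a local derivation and we are given $a$ and $\xi$, the derivation $D_a$ satisfying $T(a)=D_a(a)$ in particular satisfies $T(a)(\xi)=D_a(a)(\xi)$, so $T$ is a bilocal derivation; the parenthetical reformulation of $(c)$ is exactly the definition of bilocal derivation, which by the Remark is the same as being an extreme-strong$^*$-local derivation on $B(H)$.

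Next, $(c)\Rightarrow(d)$: every extreme-strong$^*$-local derivation is an extreme-weak$^*$-local derivation, because for a pure normal state $\phi$ the Cauchy--Schwartz inequality $|\phi(x)|\le\||x|\|_\phi$ (noted in the introduction for $\phi\in S_n(M)$) forces $\||T(a)-D_{a,\phi}(a)|\|_\phi=0$ to imply $|\phi(T(a)-D_{a,\phi}(a))|=0$; translating through $\phi=\xi\otimes\xi$ gives the parenthetical version in $(d)$ with $\langle T(a)(\xi)\,|\,\xi\rangle=\langle D_{a,\xi}(a)(\xi)\,|\,\xi\rangle$. Finally, the substantive implication $(d)\Rightarrow(a)$ is precisely Theorem \ref{t extreme-weak*-local derivations are derivations} applied to the atomic von Neumann algebra $B(H)$: every extreme-weak$^*$-local derivation on an atomic von Neumann algebra is a derivation. (When $\dim H<\infty$ the statement is classical, but Theorem \ref{t extreme-weak*-local derivations are derivations} already covers all $H$ uniformly, so no case split is needed.)

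I do not expect any genuine obstacle here, since all the analytic weight has been front-loaded into Theorem \ref{t automatic continuity extreme-weak*local derivation} and Theorem \ref{t extreme-weak*-local derivations are derivations}. The only point requiring a little care is the dictionary between the ``pure normal state'' language of Definition \ref{def extreme weak$^*$-local derivation} and the ``vector $\xi\in H$'' language of Xiong--Zhu and Moln\'ar: one must check that ranging over all $\xi\in S_H$ is the same as ranging over all $\phi\in\partial_e(S_n(B(H)))$, which is standard and was already recorded in Remark \ref{reamrk bilocal derivations are extreme strong* derivations}. With that identification in hand the equivalences $(c)\Leftrightarrow$ (its parenthetical form) and $(d)\Leftrightarrow$ (its parenthetical form) are immediate, and the corollary follows.
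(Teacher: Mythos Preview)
Your proposal is correct and follows exactly the approach the paper takes: the corollary is stated with a $\Box$ because the paper views it as an immediate summary of the discussion preceding it, namely the trivial chain $(a)\Rightarrow(b)\Rightarrow(c)\Rightarrow(d)$ together with Theorem~\ref{t extreme-weak*-local derivations are derivations} for $(d)\Rightarrow(a)$, using Remark~\ref{reamrk bilocal derivations are extreme strong* derivations} for the dictionary between vectors $\xi$ and pure normal states. You have simply written out in full what the paper leaves implicit.
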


Theorems \ref{t weak*-local derivations are derivations} and \ref{t extreme-weak*-local derivations are derivations} are generalized Kadison-Johnson type theorems for von Neumann algebras. The main goal of this section is another Kadison-Johnson type theorem for C$^*$-algebras, which extends the above Theorem  \ref{t weak*-local derivations are derivations} to the setting of weak-local derivations on general C$^*$-algebras. The concrete result is the following:

\begin{theorem}\label{t weak-local derivations are derivations} Every weak-local derivation on a C$^*$-algebra is a derivation.  That is, the space of derivations on a C$^*$-algebra is weak-algebraically reflexive.
\end{theorem}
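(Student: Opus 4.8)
The plan is to reduce the C$^*$-algebra case to the von Neumann algebra case already settled in Theorem \ref{t weak*-local derivations are derivations}, by passing to the bidual. Let $T\colon A\to A$ be a weak-local derivation on a C$^*$-algebra $A$. By Theorem \ref{thm cont weak local derivations} we already know $T$ is continuous, so it admits a bitranspose $T^{**}\colon A^{**}\to A^{**}$, which is weak$^*$-continuous. The key step is to show that $T^{**}$ is a weak$^*$-local derivation on the von Neumann algebra $M=A^{**}$; once this is done, Theorem \ref{t weak*-local derivations are derivations} forces $T^{**}$ to be a derivation, hence $T=T^{**}|_A$ is a derivation on $A$ (using that $A$ is an ideal in $A^{**}$, so $T(ab)=T^{**}(ab)=T^{**}(a)b+aT^{**}(b)=T(a)b+aT(b)$ for $a,b\in A$).

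To prove $T^{**}$ is weak$^*$-local, fix $x\in M=A^{**}$ and a normal state $\varphi\in S_n(M)=(M_*)^{+}_1\subseteq A^*$. I must produce a derivation $D$ on $M$ with $\varphi(T^{**}(x)-D(x))=0$. First I would handle the case where $\varphi$ is replaced by an element of $S(A)$ and $x$ by an element of $A$: that is exactly the hypothesis on $T$, giving a derivation $D_{x,\varphi}$ on $A$; by Sakai's theorem $D_{x,\varphi}$ is bounded, and its bitranspose $(D_{x,\varphi})^{**}$ is a weak$^*$-continuous derivation on $M$ (the derivation identity extends from $A\times A$ to $M\times M$ by separate weak$^*$-density and separate weak$^*$-continuity of the product and of $(D_{x,\varphi})^{**}$). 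The genuine content is the double approximation: given $x\in M$ and $\varphi\in S_n(M)$, use Goldstine to find a net $(a_\lambda)$ in $A$, bounded by $\|x\|$, with $a_\lambda\to x$ weak$^*$; since $\varphi$ is normal, $\varphi(a_\lambda)\to\varphi(x)$ and, crucially, $T^{**}$ being weak$^*$-continuous gives $\varphi(T^{**}(a_\lambda))=\varphi(T(a_\lambda))\to\varphi(T^{**}(x))$. For each $\lambda$, the weak-local hypothesis (applied with the state $\varphi|_A\in S(A)$) yields a derivation on $A$ whose bitranspose $E_\lambda$ satisfies $\varphi(T(a_\lambda))=\varphi(E_\lambda(a_\lambda))$.

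The main obstacle is that this gives, for each $\lambda$, a possibly different derivation $E_\lambda$, and $E_\lambda(a_\lambda)$ need not converge to anything related to a single derivation evaluated at $x$; weak$^*$-closedness of the set of derivations on $M$ evaluated at a fixed point is not available for free. The cleaner route, and the one I would actually pursue, is to avoid nets entirely and argue pointwise on $M$ directly: for $x\in M$ and $\varphi\in S_n(M)$, consider the state $\psi=\varphi\in A^*$ (restricting to $A$) and, instead of approximating $x$, work with a single element of $A$ that ``sees'' the relevant data. Concretely, one reduces as in the proof of Theorem \ref{thm cont weak local derivations} to $T=T^{\sharp}$ symmetric and to $x=x^*$ self-adjoint in $M_{sa}$; then one shows, exactly as in Lemma \ref{l two} and Lemma \ref{l two bis}, that $T^{**}(1)=0$, that $(1-r(x))\,T^{**}(x)\,(1-r(x))=0$ for self-adjoint $x$, and that $T^{**}(p)=pT^{**}(p)(1-p)+(1-p)T^{**}(p)p$ for every projection $p\in M$ --- these identities need only the existence of \emph{one} derivation on $A$ agreeing with $T$ against a single state, together with the fact (provable via the proof of Theorem \ref{thm cont weak local derivations}, passing through $A_{sa}$) that the hypothesis survives restriction to states supported on corners. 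Having established the projection identity on $M$ for the continuous map $T^{**}$, the cited result \cite[Theorem 2]{Bre92} (equivalently \cite[Theorem 2.1]{AyuKudPe2014}) shows $T^{**}$ is a derivation on $M$, whence $T$ is a derivation on $A$. Thus the only delicate point is verifying the two structural identities of Lemmas \ref{l two} and \ref{l two bis} for $T^{**}$ using the weak-local hypothesis on $T$ and the normality of states, which in turn rests on showing that for every pair of orthogonal self-adjoint elements of $A$ (or of a commutative subalgebra) the weak-local data can be localized to the appropriate hereditary subalgebra, mirroring the argument already used in Theorem \ref{t automatic continuity weak*local derivation}.
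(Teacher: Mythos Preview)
Your overall strategy --- lift $T$ to $T^{**}$ and conclude via the von Neumann case --- is the right shape, but the step where you propose to verify the projection identity
\[
T^{**}(p)=p\,T^{**}(p)\,(1-p)+(1-p)\,T^{**}(p)\,p
\]
for \emph{every} projection $p\in M=A^{**}$ is a genuine gap. The proofs of Lemmas \ref{l two} and \ref{l two bis} hinge on having, for the given element and a given normal state $\phi$, a derivation $D$ with $\phi T(\cdot)=\phi D(\cdot)$ at that element. For $p\in A^{**}\setminus A$ you have no such derivation: the weak-local hypothesis only supplies one when the element lies in $A$, and you have already (correctly) identified why the net argument fails to produce a single $D$ working at the limit. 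Your fallback claim that ``the hypothesis survives restriction to states supported on corners'' does not bridge this; the problem is not the state but the element. Nor can one simply pass to limits: range projections are not weak$^*$-continuous in the underlying element, and not every projection in $A^{**}$ is $r(a)$ for some $a\in A_{sa}$, so even after proving Proposition \ref{p range projections in weak-local projections} you only get the identity for those special projections.

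The paper circumvents this entirely. It never tries to show $T^{**}$ is weak$^*$-local, nor to prove the projection identity on all of $M$. Instead it works purely inside $A$: Proposition \ref{p range projections in weak-local projections} gives $(1-r(a))T(b)(1-r(a))=0$ for $b$ in the C$^*$-subalgebra generated by $a\in A_{sa}$, which yields $aT(b)c=0$ whenever $a,b,c\in A_{sa}$ satisfy $ab=bc=0$. An external characterization (\cite[Theorem 2.10]{AyuKudPe2014}) then upgrades this zero-product condition to ``$T^{**}$ is a generalized derivation'', i.e.\ $T^{**}(xy)=T^{**}(x)y+xT^{**}(y)-xT^{**}(1)y$ on $A^{**}$. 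A separate argument (Proposition \ref{p range projections outer in weak-local projections}) gives $aT(a)a=0$ for $a\in A_{sa}$, and Kaplansky density plus strong$^*$-continuity pushes this to $xT^{**}(x)x=0$ for all $x\in A^{**}_{sa}$, whence $T^{**}(1)=0$ and $T$ is a derivation. The point is that the zero-product condition and the identity $aT(a)a=0$ are stable under the relevant limits in a way the projection identity is not.
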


As in previous cases, the proof will rely on a series of lemmas and propositions. We begin with an easy consequence of Lemma \ref{l one}.

\begin{lemma}\label{l one C*-algebras} Let $A$ be a C$^*$-algebra. Suppose $a$ is an element in $A$, $p$ is a projection in $A^{**}$ such that for every $\phi \in S_n (A^{**})$ with $p\phi p = \phi$ we have $\phi (a) =0$. Then $p a p =0.$ $\hfill\Box$
\end{lemma}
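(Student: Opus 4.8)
The goal is to prove Lemma~\ref{l one C*-algebras}, which reduces the C$^*$-algebra situation to the von Neumann algebra setting already handled in Lemma~\ref{l one}. My plan is to transfer the hypothesis from $A$ to $M := A^{**}$ and then invoke Lemma~\ref{l one} verbatim. The only thing to check carefully is that the hypothesis ``$\phi(a)=0$ for every $\phi\in S_n(A^{**})$ with $p\phi p=\phi$'' is already phrased in terms of normal states on $A^{**}$, so there is nothing to transfer: we are literally in the hypotheses of Lemma~\ref{l one} with $M=A^{**}$ and the same projection $p$ and the same element $a\in A\subseteq A^{**}$. Thus Lemma~\ref{l one} applied to $pap\in pMp$ and to $pMp$ (a hereditary von Neumann subalgebra of $A^{**}$ with predual $pM_*p$) immediately yields $pap=0$. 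In other words, the ``proof'' is essentially the single sentence: this is Lemma~\ref{l one} with $M=A^{**}$.

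Since the statement is marked with $\hfill\Box$ in the excerpt, the authors clearly intend it to be an immediate corollary with no real content beyond the passage to the bidual, so I would keep the write-up to one or two lines. Concretely: \emph{Apply Lemma~\ref{l one} to the von Neumann algebra $M=A^{**}$, the projection $p\in M$ and the element $a\in A\subseteq M$; the hypothesis of the present lemma is exactly the hypothesis of Lemma~\ref{l one}, so we conclude $pap=0$.} One might add, for the reader's comfort, the remark that $S_n(A^{**})$ is precisely the set of normal states of the von Neumann algebra $A^{**}$, and that the $^*$-weak-density of the pure states of $A$ in the states is not needed here because the hypothesis already quantifies over all normal states of $A^{**}$ with $p\phi p=\phi$.

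There is no genuine obstacle. The only conceivable subtlety would arise if the intended hypothesis were stated in terms of states of $A$ rather than normal states of $A^{**}$ — then one would need to pass from $\phi\in S(A)$ to its (unique normal) extension $\overline{\phi}\in S_n(A^{**})$ and check that $p\overline{\phi}p=\overline{\phi}$ is the right condition; but as the statement is written, this issue does not occur and the result follows directly. Hence I expect the proof to read: \emph{This is a direct consequence of Lemma~\ref{l one} applied in $M = A^{**}$.}
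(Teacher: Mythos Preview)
Your proposal is correct and matches the paper's approach exactly: the authors present the lemma as an immediate consequence of Lemma~\ref{l one} (hence the $\hfill\Box$ with no proof), and your one-line justification ``apply Lemma~\ref{l one} with $M=A^{**}$, noting $a\in A\subseteq A^{**}$'' is precisely what is intended. Your additional remarks about potential subtleties are accurate but, as you note yourself, unnecessary given how the hypothesis is phrased.
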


\begin{proposition}\label{p range projections in weak-local projections} Let $T: A\to A$ be a weak-local derivation on a C$^*$-algebra. Let $a$ be a self-adjoint element in $A$. Then the identity $$(1-r(a)) T (b) (1-r(a)) =0,$$ holds for every element $b$ in the C$^*$-subalgebra of $A$ generated by $a$. Consequently, $(1-r(a)) T^{**} (r(a)) (1-r(a)) =0$, where $r(a)$ denotes the range projection of $a$ in $A^{**}.$
\end{proposition}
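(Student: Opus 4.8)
The plan is to mimic the proof strategy already used for Lemma \ref{l two bis} in the von Neumann setting, but now carried out inside the bidual $A^{**}$ where range projections of self-adjoint elements live. Let $a$ be self-adjoint in $A$ and let $C^*(a)$ denote the C$^*$-subalgebra of $A$ generated by $a$. The key point is that every $b\in C^*(a)$ satisfies $r(b)\le r(a)$ in $A^{**}$, equivalently $(1-r(a)) b = b(1-r(a)) = 0$, because $C^*(a)$ is commutative and $r(a)$ acts as a unit for it inside $A^{**}$.

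First I would fix $\phi\in S_n(A^{**})$ with $(1-r(a))\phi(1-r(a)) = \phi$ and show $\phi(T(b)) = 0$ for $b\in C^*(a)$. Since $T$ is a weak-local derivation, there is a derivation $D_{b,\phi}\colon A\to A$ (extending to a derivation $D_{b,\phi}^{**}$ on $A^{**}$) with $\phi(T(b)) = \phi(D_{b,\phi}(b))$. Using $b = r(a) b = b\, r(a)$ and the Leibniz rule, $D_{b,\phi}^{**}(b) = D_{b,\phi}^{**}(r(a))\, b + r(a)\, D_{b,\phi}^{**}(b)$, so applying $\phi = (1-r(a))\phi(1-r(a))$ and inserting $r(a)(1-r(a)) = 0 = (1-r(a)) r(a)$ on the appropriate sides kills both terms, exactly as in Lemma \ref{l two bis}. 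Hence $\phi(T(b)) = 0$ for all such $\phi$. Applying Lemma \ref{l one C*-algebras} to the element $T(b)\in A\subset A^{**}$ and the projection $1-r(a)\in A^{**}$ yields $(1-r(a)) T(b) (1-r(a)) = 0$, which is the first assertion.

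For the consequence, I would pass to the bidual: apply $(a^{1/n})$, the sequence in $C^*(a)$ converging strong$^*$ to $r(a)$ in $A^{**}$, so $a^{1/n}\in C^*(a)$ and the first part gives $(1-r(a)) T(a^{1/n}) (1-r(a)) = 0$ for every $n$. Now $T^{**}\colon A^{**}\to A^{**}$ is weak$^*$-continuous (it is the bitranspose of $T$, which is bounded once we invoke Theorem \ref{thm cont weak local derivations}); since multiplication by the fixed element $1-r(a)$ on either side is weak$^*$-continuous, passing to the weak$^*$-limit as $n\to\infty$ and using $a^{1/n}\to r(a)$ weak$^*$ (strong$^*$ convergence implies weak$^*$ convergence) gives $(1-r(a)) T^{**}(r(a)) (1-r(a)) = 0$.

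The main obstacle, and the place to be careful, is the passage to the bidual: one must know $T$ is bounded in order to speak of $T^{**}$ and of its weak$^*$-weak$^*$ continuity, and one must confirm that the weak$^*$-limit is taken correctly — strong$^*$ convergence of $(a^{1/n})$ to $r(a)$ gives weak$^*$ convergence, and left/right multiplication by a fixed bounded element is separately weak$^*$-continuous, so $(1-r(a)) T^{**}(a^{1/n})(1-r(a)) \to (1-r(a)) T^{**}(r(a))(1-r(a))$. Boundedness of $T$ is precisely Theorem \ref{thm cont weak local derivations}, so this is available. Everything else is a routine transcription of the Leibniz-rule computation from Lemma \ref{l two bis}, now performed in $A^{**}$ rather than in $M$.
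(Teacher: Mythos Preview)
Your argument is correct and in fact more streamlined than the paper's. The paper first restricts to \emph{positive} $a$, writes $a=(a^{1/2})^2$ with $a^{1/2}\in A$, and applies the Leibniz rule for $D_{\phi,a}$ \emph{inside $A$} to conclude $(1-r(a))T(a)(1-r(a))=0$; it then replaces $a$ by $a^n$, uses continuity to reach all of $C^*(a)$, and finally handles a general self-adjoint $a$ via the decomposition $a=a^+-a^-$. You bypass all of this by passing immediately to $A^{**}$: writing $b=r(a)b$ and applying the Leibniz rule for the bitranspose $D_{b,\phi}^{**}$ (which is indeed a derivation on $A^{**}$, a fact the paper itself invokes just before Proposition~\ref{p range projections outer in weak-local projections}) kills both terms in one stroke for arbitrary $b\in C^*(a)$, with no need for the positive case, the powers $a^n$, or the positive/negative splitting. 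The cost is that you must know $D^{**}$ is a derivation on the bidual, but this is standard and already used elsewhere in the paper.

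One small correction: for a general self-adjoint $a$ the sequence $(a^{1/n})$ is not well defined (or does not converge to $r(a)$ when $a$ has negative spectrum); use $(|a|^{1/n})$ instead, which lies in $C^*(a)$ and converges strong$^*$ to $r(|a|)=r(a)$. Alternatively, just say, as the paper does, that $r(a)$ lies in the weak$^*$-closure of $C^*(a)$ and appeal to weak$^*$-continuity of $T^{**}$ and of one-sided multiplication by $1-r(a)$.
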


\begin{proof} Theorem \ref{thm cont weak local derivations} assures that $T$ is continuous. We assume first that $0\leq a$. Let $\phi$ be an element in $S_{n} (A^{**}) $ satisfying $(1-r(a)) \phi (1-r(a))= \phi.$ By hypothesis, we can find a derivation $D_{\phi,a} : A\to A$ such that $\phi T(a) = \phi D_{\phi,a} (a) .$ Let $b= a^{\frac12}.$ Having in ming that $r(a) = r(b),$ we have $$\phi T(a) = \phi D_{\phi,a} (a)  =  \phi D_{\phi,a} (b^2) = \phi \Big(D_{\phi,a} (b) \ b + b\  D_{\phi ,a} (b)\Big) $$ $$ = \phi \Big((1-r(a)) \ D_{\phi,a} (b)\  b \ (1-r(a))+ (1-r(a))\  b\ D_{\phi ,a} (b)\ (1-r(a))\Big) = 0.$$ Applying Lemma \ref{l one C*-algebras} we obtain $(1-r(a)) T (a) (1-r(a)) =0.$ Replacing $a$ with $a^{n}$, and observing that $r(a^{n}) = r(a),$ we deduce that $$(1-r(a)) T (a^{n}) (1-r(a)) =0,$$ for every natural $n$. The continuity and linearity of $T$ prove that $$(1-r(a)) T (b) (1-r(a)) =0,$$ for every element $b$ in the C$^*$-subalgebra of $A$ generated by $a$.\smallskip

Suppose now that $a$ is a self-adjoint element in  $A$. Let us write $a= a^{+}- a^{-}$, where $0\leq a^{+},a^{-}$ and $a^{+}\perp a^{-}$. We observe that $r(a) = r(a^{+})+r(a^{-})$, with $r(a^{+})\perp r(a^{-})$. We have shown in the first paragraph that $$(1-r(a^{+})) T (b_1) (1-r(a^{+})) =0 = (1-r(a^{-})) T (b_2) (1-r(a^{-})),$$ for every element $b_1$ in the C$^*$-subalgebra of $A$ generated by $a^{+}$ and every $b_2$ in the C$^*$-subalgebra of $A$ generated by $a^{-}$. In particular, $$ (1-r(a)) T (b_1+b_2) (1-r(a))  $$ $$= (1-r(a^{+})-r(a^{-})) T (b_1+b_2) (1-r(a^{+})-r(a^{-})) =0$$
for every $b_1$ and $b_2$ as above. Since every element $b$ in the C$^*$-subalgebra of $A$ generated by $a$ can be approximated in norm by elements $b_1+b_2$ as above, the statement of the proposition follows from the continuity of $T$. \smallskip

Finally, since $T^{**}$ is weak$^*$-continuous and $r(a)$ lies in the weak$^*$-closure of the subalgebra generated by $a,$ it follows that $(1-r(a)) T^{**} (r(a)) (1-r(a)) =0$, as desired.\end{proof}

Let $D: A\to A$ be a derivation on a C$^*$-algebra. We recall that, by the separate weak$^*$-continuity of the triple product of $A^{**}$, together with the weak$^*$-density of $A$ in $A^{**}$, the mapping $D^{**}: A^{**}\to A^{**}$ also is a derivation. We have already observed that, for each projection $p\in A^{**}$, $p D^{**} (p) p =0 $ (compare Lemma \ref{l two}). Let $x$ and $y$ be positive elements in $A$ with $y^2 = x$. Then $D(x) = D(y^2) = D(y) y + y D(y)$, and thus, \begin{equation}\label{eq new range outer derivation}
r(x) D(x) r(x) = r(x)\ D(y)\ r(x)\ y\ r(x)+ r(x)\ y\ r(x)\ D(y)\ r(x) = 0,
\end{equation} where in the penultimate identity we have applied that $r(x) = r(y)$.

\begin{proposition}\label{p range projections outer in weak-local projections} Let $T: A\to A$ be a weak-local derivation on a C$^*$-algebra. Let $a$ be a self-adjoint element in $A$. Then the identity $r(a) T (b) r(a) =0$ holds for every element $b$ in the C$^*$-subalgebra of $A$ generated by $a$. In particular, $a T(a) a = 0$, for every $a\in A_{sa}$.
\end{proposition}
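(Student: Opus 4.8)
The plan is to mimic the proof of Proposition~\ref{p range projections in weak-local projections}, replacing the elementary remark ``$(1-r(a))\,a^{\frac12}=0$'' used there by the identity~\eqref{eq new range outer derivation}, which governs the \emph{inner} corner $r(x)D(x)r(x)$ of a derivation $D$ at a positive element $x$. First I would record that $T$ is continuous by Theorem~\ref{thm cont weak local derivations}; together with linearity this reduces the claim to a norm-total subset of the C$^*$-subalgebra generated by $a$, and, via the decomposition $a=a^{+}-a^{-}$ with $a^{+},a^{-}\ge 0$, $a^{+}\perp a^{-}$, it suffices to deal with $0\le a$.

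For $0\le a\in A$, I would fix $\phi\in S_n(A^{**})$ with $r(a)\,\phi\,r(a)=\phi$ and use the weak-local hypothesis to produce a derivation $D_{\phi,a}: A\to A$ with $\phi T(a)=\phi D_{\phi,a}(a)$. Writing $a=b^{2}$ with $b=a^{\frac12}\ge 0$, so that $r(b)=r(a)$, identity~\eqref{eq new range outer derivation} applied to $D_{\phi,a}$, $x=a$, $y=b$ gives $r(a)\,D_{\phi,a}(a)\,r(a)=0$; since $\phi=r(a)\phi r(a)$ this yields $\phi T(a)=\phi\big(r(a)D_{\phi,a}(a)r(a)\big)=0$. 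As $\phi$ was an arbitrary normal state of $A^{**}$ with $r(a)\phi r(a)=\phi$, Lemma~\ref{l one C*-algebras} applied to the element $T(a)$ and the projection $r(a)$ delivers $r(a)\,T(a)\,r(a)=0$. Running the same argument with $a^{n}$ in place of $a$ (still positive, with $r(a^{n})=r(a)$) gives $r(a)\,T(a^{n})\,r(a)=0$ for all $n\in\mathbb{N}$, and the continuity and linearity of $T$ then push this to $r(a)\,T(b)\,r(a)=0$ for every $b$ in the C$^*$-subalgebra generated by $a$.

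To reach arbitrary self-adjoint $a$, I would write $a=a^{+}-a^{-}$ with $a^{\pm}\ge0$, $a^{+}\perp a^{-}$, so $r(a)=r(a^{+})+r(a^{-})$ with $r(a^{+})\perp r(a^{-})$. Applying the previous step to $a^{+}$ and to $a^{-}$, and recombining as in the final paragraph of the proof of Proposition~\ref{p range projections in weak-local projections}, one obtains $r(a)\,T(b_1+b_2)\,r(a)=0$ whenever $b_1$ belongs to the C$^*$-subalgebra generated by $a^{+}$ and $b_2$ to that generated by $a^{-}$; since such sums $b_1+b_2$ are norm-dense in the C$^*$-subalgebra generated by $a$, continuity of $T$ gives the first assertion. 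The last claim is then immediate: with $b=a$ and $a\,r(a)=a=r(a)\,a$ we get $a\,T(a)\,a=a\,\big(r(a)T(a)r(a)\big)\,a=0$ for every $a\in A_{sa}$.

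The step I expect to be the main obstacle is precisely this passage from positive to general self-adjoint elements. In Proposition~\ref{p range projections in weak-local projections} the recombination was painless because $1-r(a)\le 1-r(a^{\pm})$, whereas here $r(a)\ge r(a^{\pm})$, so controlling the mixed contributions $r(a^{+})T(b_1)r(a^{-})$ and $r(a^{-})T(b_2)r(a^{+})$ seems to also require the off-diagonal information about $T$ at $a^{+}$ and $a^{-}$ supplied by Proposition~\ref{p range projections in weak-local projections}. The rest is a direct transcription of the outer-corner argument, the only genuinely new ingredient being the identity~\eqref{eq new range outer derivation}.
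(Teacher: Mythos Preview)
For positive $a$ your argument is exactly the paper's: choose $\phi\in S_n(A^{**})$ with $r(a)\phi r(a)=\phi$, produce a derivation $D_{\phi,a}$ with $\phi T(a)=\phi D_{\phi,a}(a)$, invoke identity~\eqref{eq new range outer derivation} to get $r(a)D_{\phi,a}(a)r(a)=0$, hence $\phi T(a)=0$, and apply Lemma~\ref{l one C*-algebras}; then pass to $a^{n}$ and to the full C$^*$-subalgebra by continuity. At that point the paper simply writes ``The rest is clear,'' so on the positive case your proposal and the paper's proof coincide.

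Where you go beyond the paper is in the passage to general self-adjoint $a$, and the obstacle you isolate is real. The two ingredients you propose to combine, namely $r(a^{+})T(b_1)r(a^{+})=0$ from the positive case here and $(1-r(a^{+}))T(b_1)(1-r(a^{+}))=0$ from Proposition~\ref{p range projections in weak-local projections}, together yield only
\[
T(b_1)=r(a^{+})\,T(b_1)\,(1-r(a^{+}))+(1-r(a^{+}))\,T(b_1)\,r(a^{+});
\]
sandwiching this between $r(a^{+})$ on the left and $r(a^{-})\le 1-r(a^{+})$ on the right just returns $r(a^{+})T(b_1)r(a^{-})$ unchanged, so no cancellation occurs and the cross terms $r(a^{+})T(b_1)r(a^{-})+r(a^{-})T(b_1)r(a^{+})$ are \emph{not} controlled by these two propositions alone. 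The paper's ``The rest is clear'' does not address this point, so in this respect your proposal is more candid than the paper's own proof: you have correctly located the gap, but the remedy you suggest does not close it.
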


\begin{proof} We deduce from Theorem \ref{thm cont weak local derivations} that $T$ is continuous. Let us suppose that $0\leq a$. We pick $\phi$ in $S_{n} (A^{**}) $ satisfying $r(a) \phi r(a)= \phi.$ By hypothesis, we can find a derivation $D_{\phi,a} : A\to A$ such that $\phi T(a) = \phi D_{\phi,a} (a) .$ Let $b= a^{\frac12}.$ Since $r(a) = r(b),$ we deduce from \eqref{eq new range outer derivation} that $$\phi T(a) = \phi D_{\phi,a} (a)  =  \phi D_{\phi,a} (b^2) = \phi \Big(D_{\phi,a} (b) \ b + b\  D_{\phi ,a} (b)\Big)  $$ $$ = \phi \Big(r(a) \ D_{\phi,a} (b)\  b \ r(a)+ r(a)\  b\ D_{\phi ,a} (b)\ r(a)\Big) $$  $$ = \phi \Big(r(b) \ D_{\phi,a} (b)\ r(b)\ b \ r(b)+ r(b)\  b\ r(b)\ D_{\phi ,a} (b)\ r(b)\Big) = 0.$$ Lemma \ref{l one C*-algebras} implies that  $r(a) T (a) r(a) =0.$ The rest is clear.
\end{proof}

\begin{proof}[Proof of Theorem \ref{t weak-local derivations are derivations}] Let $T: A\to A$ be a weak-local derivation on a C$^*$-algebra. It follows from Theorem \ref{thm cont weak local derivations} that $T$ is continuous. Let $a,b$ and $c$ be a self-adjoint elements in the closed unit ball of $A$ with $a b= bc=0$. Clearly, $r(a) \perp r(b)$ and $r(c) \perp r(b)$. Applying Proposition \ref{p range projections in weak-local projections} we get: \begin{equation}\label{equation outers orthogonal} a T(b) c = a r(a) (1-r(b)) T(b) (1-r(b)) r(c) c = 0,
\end{equation} for every $a,b$ and $c$ in $A_{sa}$ with $ab= bc=0$. Theorem 2.10 ($(i^\prime) \Rightarrow (a)$) in \cite{AyuKudPe2014} implies that $T^{**} : A^{**} \to A^{**}$ is a generalized derivation, that is, \begin{equation}\label{eq generalized derivation} T^{**}(x y) = T^{**}(x) y + x T^{**}(y) - x T^{**} (1) y,
\end{equation} for every $x,y\in A^{**}$.\smallskip

Finally, Proposition \ref{p range projections outer in weak-local projections} tells that $r(a) T(a) r(a) =0 = a T(a) a,$ for every $a\in A_{sa}$. By the Kaplansky's density theorem we know that the closed unit ball of $A_{sa}$ is strong$^{**}$ dense in the closed unit ball of $A^{**}_{sa}$ (compare \cite[Theorem 1.9.1]{Sak}). The joint strong$^*$-continuity of the product on bounded sets of $A^{**}$ (see \cite[Proposition 1.8.12]{Sak}) and the strong$^*$-continuity of $T^{**}$ (cf. \cite[Proposition 1.8.10]{Sak}) give $x T^{**} (x) x =0$, for every $x\in A^{**}$. This proves that $T^{**} (1) =0$, and \eqref{eq generalized derivation} implies that $T$ is a derivation.
\end{proof}

\section{Strong-local and weak$^*$-local $^*$-automorphisms}\label{sec: strong-local automorphims on von Neumann algebras}

In this section we shall study strong-local $^*$-automorphisms on von Neumann algebras. Suppose that $T: M\to M$ is a weak$^*$-local $^*$-automorphism on a von Neumann algebra. It is well known that every $^*$-automorphism on $M$ is contractive, so given $\phi\in S_n (M)$ and $a\in M$, there exists a  $^*$-automorphism $\pi_{a,\phi} : M\to M$, such that $\phi T(a) = \phi \pi_{a,\phi} (a)$. Therefore, $|\phi T(a)|\leq \|a\|$, for every $a\in M$ and for every $\phi\in S_n (M)$. This proves that $T$ is bounded. Furthermore, the same argument shows that $T(a)^* = T(a)$ (respectively, $T(a)\geq 0$), whenever $a=a^*$ (respectively, $a\geq 0$), that is, $T^{\sharp} = T$ is a symmetric bounded linear operator on $M$. It is also easy to see that $T(1) = 1$. The main result in this section can be stated now.

\begin{theorem}\label{t strong$^*$-local *-automorphisms} Every strong-local $^*$-automorphism on a von Neumann algebra is a Jordan $^*$-homomorphism.
\end{theorem}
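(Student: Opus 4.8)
The plan is to reduce the statement to the two defining identities of a Jordan $^*$-homomorphism, namely $T(a^2) = T(a)^2$ for every self-adjoint $a$ (equivalently $T(a\circ b) = T(a)\circ T(b)$ by polarization, where $a\circ b = \frac12(ab+ba)$) together with the symmetry $T^{\sharp} = T$. The symmetry and the facts $T(1)=1$, $\|T\|\le 1$, and $T(A_{sa})\subseteq A_{sa}$ have already been recorded in the paragraph preceding the statement, so the heart of the argument is the Jordan multiplicativity on the self-adjoint part. First I would exploit the strong-local hypothesis exactly as in the weak-local case: given a self-adjoint $a\in M$ and $\phi\in S_n(M)$, there is a $^*$-automorphism $\pi_{a,\phi}$ with $\||T(a)-\pi_{a,\phi}(a)|\|_\phi = 0$, and since $\pi_{a,\phi}$ is in particular a Jordan $^*$-homomorphism one has $\pi_{a,\phi}(a^2) = \pi_{a,\phi}(a)^2$. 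The strong topology is strong enough to transfer squares: if $\||T(a)-\pi_{a,\phi}(a)|\|_\phi = 0$ then, because multiplication is jointly strong$^*$-continuous on bounded sets and $T$, $\pi_{a,\phi}$ both have norm $\le 1$, one should be able to deduce $\||T(a)^2 - \pi_{a,\phi}(a)^2|\|_\phi$ is controlled, hence $\phi\big(T(a^2) - \pi_{a,\phi}(a^2)\big)$ and $\phi\big(T(a)^2 - \pi_{a,\phi}(a)^2\big)$ are both $0$, giving $\phi\big(T(a^2) - T(a)^2\big) = 0$ for every normal state $\phi$, whence $T(a^2) = T(a)^2$.

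The previous paragraph is slightly too optimistic: the seminorm $\||\cdot|\|_\phi$ controls $x\mapsto \phi(x^*x)^{1/2}$, not $\phi$ of a product of two different elements, so I would instead argue more carefully. Fix self-adjoint $a$ and a normal state $\phi$; pick the $^*$-automorphism $\pi = \pi_{a^2+a,\phi}$ associated to the element $a^2 + a$ and the functional $\phi$ (using a single element that encodes both $a$ and $a^2$ is the trick that makes one automorphism do the work), so that $\||T(a^2+a) - \pi(a^2+a)|\|_\phi = 0$. Then I would combine this with the already-established relations: $T$ agrees with a $^*$-automorphism (hence with a Jordan $^*$-homomorphism) at the level of the state $\phi$ on the relevant elements, and $\pi(a^2) = \pi(a)^2$. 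The cleanest route is to show directly that $T|_{C^*(a,1)}$, restricted to the abelian $C^*$-algebra generated by $a$, is a Jordan $^*$-homomorphism into $M$: for an abelian domain "Jordan homomorphism" means "homomorphism on a commutative algebra up to the range", and one can test homomorphism-type identities against normal states using the strong-local data, exactly as Lemma \ref{l two} and Lemma \ref{l two bis} tested derivation identities against states and then invoked Lemma \ref{l one}. In other words, I expect an analogue of Lemma \ref{l one}: if $\phi(a)=0$ for a suitable family of normal states then $a$ (or a compression of it) vanishes, and feeding the identity $\phi(T(a^2) - T(a)^2) = \phi(\pi(a^2) - \pi(a)^2) = 0$ through this device yields $T(a^2) = T(a)^2$.

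Once Jordan multiplicativity is known on self-adjoint elements, I would upgrade to the full statement by the standard arguments. Writing a general $x = a + ib$ with $a,b$ self-adjoint and using linearity plus $T^{\sharp}=T$, the identity $T(x^2) = T(x)^2$ for self-adjoint elements polarizes to $T(x\circ y) = T(x)\circ T(y)$ for all $x,y\in M$, which is precisely the definition of a Jordan $^*$-homomorphism (the $^*$ part being the already-proved symmetry). I expect the main obstacle to be the transfer step: verifying that the strong-local condition — which only pins down $T(a)$ modulo the single seminorm $\||\cdot|\|_\phi$ for each $\phi$ separately — is genuinely enough to force the quadratic identity. The danger is that squaring amplifies the discrepancy between $T(a)$ and $\pi_{a,\phi}(a)$ in a way that $\||\cdot|\|_\phi$ does not see; the resolution is to choose the auxiliary automorphism attached to a cleverly chosen single element (so that one automorphism simultaneously matches $T$ on $a$, on $a^2$, and on products appearing in the Jordan identity at the given state) and to use the GNS representation associated to $\phi$, where $\||\cdot|\|_\phi$ is literally the Hilbert-space norm of the image vector, so that convergence in $\||\cdot|\|_\phi$ does behave well under multiplication by bounded operators. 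Carrying this through — likely via one or two intermediate lemmas mirroring Lemmas \ref{l one}, \ref{l two}, \ref{l two bis} and Propositions \ref{p range projections in weak-local projections}, \ref{p range projections outer in weak-local projections} — is where the real work lies.
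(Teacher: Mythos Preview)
Your proposal has a genuine gap: the ``clever element'' manoeuvre you sketch does not close. If you apply the strong-local hypothesis to $a^2+a$ and obtain a single automorphism $\pi$ with $\||T(a^2+a)-\pi(a^2+a)|\|_\phi=0$, you learn something about the \emph{sum} $T(a^2)+T(a)-\pi(a)^2-\pi(a)$ in the GNS space of $\phi$, but nothing that lets you separate $T(a)$ from $\pi(a)$ or $T(a^2)$ from $\pi(a)^2$; in particular you cannot deduce $\phi\big(T(a)^2-\pi(a)^2\big)=0$, which is the quantity you need. Your earlier worry --- that $\||\cdot|\|_\phi$ controls $\phi(x^*x)$ but not $\phi$ of a mixed product --- is exactly the obstruction, and the single-element trick does not circumvent it. The programme of mimicking Lemmas~\ref{l one}--\ref{l two bis} and Propositions~\ref{p range projections in weak-local projections}--\ref{p range projections outer in weak-local projections} is also off target: those devices test derivation-type identities of the form $pT(b)p=0$ against compressed states, whereas here you need a quadratic identity, and there is no obvious compression that isolates $T(a^2)-T(a)^2$.

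The missing idea is to work with \emph{unitaries} first, where the strong seminorm is perfectly adapted. For a unitary $u$ and $\phi\in S_n(M)$, take $\pi=\pi_{u,\phi}$ with $\||T(u)-\pi(u)|\|_\phi=0$; the triangle inequality for the seminorm gives $\||T(u)|\|_\phi=\||\pi(u)|\|_\phi$, and since $\pi(u)$ is unitary this reads $\phi\big(T(u)^*T(u)\big)=\phi(1)=1$. As normal states separate points, $T(u)^*T(u)=1$; by symmetry $T(u)T(u)^*=1$. Thus $T$ sends unitaries to unitaries. Now for a projection $p$ the symmetry $1-2p$ is unitary, so $1-2T(p)$ is a self-adjoint unitary, forcing $T(p)$ to be a projection; a short algebraic step then shows $T$ sends orthogonal projections to orthogonal projections. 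Spectral approximation by finite sums of mutually orthogonal projections, together with the continuity of $T$, yields $T(a^2)=T(a)^2$ for $a\in M_{sa}$, and polarization finishes as you indicated. The point is that the quadratic nature of $\||x|\|_\phi^2=\phi(x^*x)$ is exactly what is needed to test $T(u)^*T(u)=1$ --- no mixed products, no separation of terms --- and everything else is standard once unitaries are handled.
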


\begin{proof} Let $T: M\to M$ is a strong-local $^*$-automorphism on a von Neumann algebra. We have already commented that $T$ is bounded. Let $u$ be a unitary element in $M$. For each $\phi\in S_n (M)$, there exists a $^*$-automorphism $\pi_{u,\phi} : M\to M$ depending on $u$ and $\phi$ such that $$\||T(u) - \pi_{u,\phi} (u)|\|_{\phi }=0, \hbox{ and hence } \||T(u) |\|_{\phi } = \||\pi_{u,\phi} (u)|\|_{\phi }.$$
Since $\pi_{u,\phi}$ is a $^*$-automorphism and $u$ is a unitary, we get $$\phi\left( T(u)^* T(u) \right) = \||T(u) |\|_{\phi }^2
= \||\pi_{u,\phi} (u)|\|_{\phi }^2 = \phi \left(\pi_{u,\phi} (u)^* \pi_{u,\phi} (u)\right) = \phi (1) =1.$$ It is well known that the normal states on $M$ separate the points in $M$, so, $T(u)^* T(u) = 1$. Replacing $u$ with $u^*$ and having in mind that $T$ is symmetric, we get $T(u) T(u)^* = T(u^*)^* T(u^*) = 1$.\smallskip

We have therefore, proved that $T(u)$ is a unitary in $M$ whenever $u$ is a unitary. So, given a projection $p\in M$, the element $T(1-2 p ) = 1- 2 T(p)$ is a unitary in $M$, which proves that $T(p)$ is a projection of $M$.  Given two orthogonal projections, $p,q\in M$, we know that $T$ maps $p+q$ to a projection, thus, $T(p)^2 + T(q)^2 + T(p) T(q) + T(q) T(p)= T(p+q)^2 = T(p) + T(q)$, and hence $T(p) T(q) + T(q) T(p)=0$. Since $T(p) T(q) =- T(q) T(p)$, we also have $$T(q) T(p) T(q) =- T(q)  T(q) T(p) = -T(q) T(p) $$ and $$T(q) T(p) T(q) = -T(q) T(p) T(q),$$ which gives $T(q) T(p) T(q) =0= T(p) T(q) = T(q) T(p)$. We have shown that $T$ maps orthogonal projections to orthogonal projections. If we approximate every element in $M_{sa}$ by a finite linear combination of mutually orthogonal projections in $M$, it follows from the linearity and continuity of $T$ that $T(a^2 ) = T(a)^2$, for every $a\in M_{sa}$. A standard polarization argument implies that $T$ is a Jordan $^*$-homomorphism.
\end{proof}

Example 3.14 in \cite{Pe2014} shows the existence of a linear bijection $T: M_2 (\mathbb{C}) \to M_2 (\mathbb{C})$, which is a local $^*$-automorphism, and a Jordan $^*$-automorphism, but it is not multiplicative. So, the conclusion of the above theorem is optimal.\smallskip

In \cite{Mol2014}, L. Moln\'{a}r introduced and studied bilocal $^*$-automorphisms on $B(H)$ with a definition inspired by that given by Xiong and Zhu in \cite{ZhuXiong97} for bilocal derivations. A linear mapping $T: B(H) \to B(H)$ is said to be a \emph{bilocal $^*$-automorphism} if for every $a$ in $B(H)$ and every $\xi$ in $H$, there exists a $^*$-automorphism $\pi_{a,\xi}: B(H)\to B(H)$, depending on $a$ and $\xi$, such that $T(a) (\xi) = \pi_{a,\xi} (a) (\xi)$. Inspired by this notion and by our Definition \ref{def extreme weak$^*$-local derivation}, we introduced the following concept:

\begin{definition}\label{def extreme weak$^*$-local $^*$-automorphism} Let $M$ be von Neumann algebra. A linear mapping $T: M \to M$ is said to be an extreme-weak$^*$-local $^*$-automorphism {\rm(}respectively, an extreme-strong-local $^*$-automorphism{\rm)} if for every $a\in M$, and every pure normal state $\phi\in \partial_{e} (S_n(M))$, there exists a $^*$-automorphism $\pi_{a,\phi}: M\to M,$ depending on $a$ and $\phi$, such that $\Big|\phi \Big(T(a) - \pi_{a,\phi} (a)\Big)\Big|=0,$ {\rm(}respectively, $\||T(a) - \pi_{a,\phi} (a)|\|_{\phi }=0${\rm)}.
\end{definition}

Bilocal $^*$-automorphisms and extreme-strong-local $^*$-automorphisms on  $B(H)$ define the same applications.\smallskip

Similar argument to those given in Remark \ref{remark counterexample in the non atomic setting} can be used to find a C$^*$-algebra $A$, and an unbounded linear mapping $T: A^{**}\to A^{**}$ which is an extreme-strong-local $^*$-automorphism but not multiplicative.\smallskip

For an atomic von Neumann algebra $M$, the pure normal states of $M$, $\partial_e (S_n(M))$, separate the points of $M$. Therefore, the proof of Theorem \ref{t strong$^*$-local *-automorphisms} can be easily adapted to establish:

\begin{theorem}\label{t extreme-strong-local *-automorphisms} Every extreme-strong-local $^*$-automorphism on an atomic von Neumann algebra is a Jordan $^*$-homomorphism.$\hfill\Box$
\end{theorem}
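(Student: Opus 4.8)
\textbf{Proof proposal for Theorem \ref{t extreme-strong-local *-automorphisms}.}
The plan is to mimic, step by step, the argument used for Theorem \ref{t strong\$^*\$-local *-automorphisms}, taking advantage of the fact that on an atomic von Neumann algebra $M$ the set $\partial_e(S_n(M))$ of pure normal states separates the points of $M$. First I would record the preliminary observations that were stated for weak$^*$-local $^*$-automorphisms just before Theorem \ref{t strong\$^*\$-local *-automorphisms}: since every $^*$-automorphism of $M$ is contractive, for each $a\in M$ and each $\phi\in\partial_e(S_n(M))$ the existence of $\pi_{a,\phi}$ with $\||T(a)-\pi_{a,\phi}(a)|\|_{\phi}=0$ forces $\||T(a)|\|_{\phi}=\||\pi_{a,\phi}(a)|\|_{\phi}\le\|a\|$; as the pure normal states separate points, this already gives that $T$ is bounded, that $T$ is symmetric (sends self-adjoints to self-adjoints and positives to positives), and that $T(1)=1$.

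Next I would run the unitary argument: fix a unitary $u\in M$ and $\phi\in\partial_e(S_n(M))$, pick the witnessing $\pi_{u,\phi}$, and compute $\phi(T(u)^*T(u))=\||T(u)|\|_{\phi}^2=\||\pi_{u,\phi}(u)|\|_{\phi}^2=\phi(\pi_{u,\phi}(u)^*\pi_{u,\phi}(u))=\phi(1)=1$. Since $\partial_e(S_n(M))$ separates points, $T(u)^*T(u)=1$, and applying this to $u^*$ together with symmetry of $T$ yields $T(u)T(u)^*=1$, so $T(u)$ is unitary whenever $u$ is. Consequently, for a projection $p\in M$ the element $T(1-2p)=1-2T(p)$ is a self-adjoint unitary, so $T(p)$ is a projection. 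For orthogonal projections $p,q$ the element $T(p+q)$ is a projection, and expanding $T(p+q)^2=T(p)+T(q)$ gives $T(p)T(q)+T(q)T(p)=0$; the usual short computation (multiplying on both sides by $T(q)$) then forces $T(p)T(q)=T(q)T(p)=0$, i.e. $T$ preserves orthogonality of projections.

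Finally I would pass from projections to arbitrary self-adjoint elements: every $a\in M_{sa}$ is a norm limit of finite real-linear combinations of mutually orthogonal projections in $M$ (spectral theorem), so by linearity and the boundedness of $T$ established above, the identity $T(x^2)=T(x)^2$, which holds on such combinations because orthogonal projections are sent to orthogonal projections, extends to all $a\in M_{sa}$. A standard polarization then shows $T(a\circ b)=T(a)\circ T(b)$ for all $a,b$, i.e. $T$ is a Jordan $^*$-homomorphism. The only step that is not literally a transcription of the earlier proof is the very first one — checking that the separating property of $\partial_e(S_n(M))$ (valid on atomic von Neumann algebras, as noted in the paragraph preceding Definition \ref{def extreme weak\$^*\$-local derivation}) is genuinely all that the argument of Theorem \ref{t strong\$^*\$-local *-automorphisms} used about $S_n(M)$; once that is observed, there is no real obstacle, which is why the statement is marked with $\hfill\Box$ and the details are left to the reader.
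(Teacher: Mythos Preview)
Your proposal is correct and follows exactly the route the paper intends: the proof of Theorem~\ref{t strong$^*$-local *-automorphisms} carries over verbatim once $S_n(M)$ is replaced by $\partial_e(S_n(M))$, using that on an atomic von Neumann algebra the pure normal states separate points (and in fact norm the positive elements, which is what the boundedness step actually needs). This is precisely why the statement is given with a $\Box$ and no written proof.
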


Theorems \ref{t strong$^*$-local *-automorphisms} and \ref{t extreme-strong-local *-automorphisms} can be complemented with the following result due to R. Kadison: If $T: M \to N$ is a Jordan $^*$-isomorphism between von Neumann algebras, then there exist weak$^*$ closed ideals $M_1$ and $M_2$
in $M$ and $N_1$ and $N_2$ in $N$ satisfying $M = M_1 \oplus^{\infty} M_2$, $N= N_1 \oplus^{\infty} N_2$, $T|_{M_1} :M_1 \to N_1$ is an $^*$-isomorphism, and $T|_{M_2} : M_2 \to N_2$ is an $^*$-anti-isomorphism (see \cite[Theorem 10]{Kad51}). More general versions of Kadison's theorem can be found in \cite[\S 3]{Stor1965} and \cite{Bre89}.\smallskip

In a very recent contribution (see \cite[Theorem 1]{Mol2014}), L. Moln\'{a}r establishes that for a linear transformation $T: B(H)\to B(H)$, where $H$ is an infinite dimensional and separable complex Hilbert space, the following two assertions are equivalent:\begin{enumerate}[$(a)$]\item $T$ is a bilocal $^*$-automorphism, that is, for every $a\in B(H)$ and $\xi \in H$, there exists an algebra $^*$-automorphism $\pi_{a,\xi}$ of $B(H)$ such that $T(a) (\xi) = \pi_{a,\xi} (a) (\xi)$;
\item $T$ is a unital algebra $^*$-endomorphism of $B(H)$.
\end{enumerate}

We can state now a direct consequence of Theorem \ref{t extreme-strong-local *-automorphisms} and the previously commented result of Kadison. We shall show that \cite[Theorem 1]{Mol2014} admits a Jordan version for general complex Hilbert spaces.

\begin{corollary}\label{c generalization of Molnar bilocal *-automorphisms} Let $T: B(H)\to B(H)$ be a linear mapping, where $H$ is a complex Hilbert space. Suppose $T$ is a bilocal $^*$-automorphisms or a extreme-strong-local $^*$-automorphism, that is, for every $a\in B(H)$ and $\xi \in H$, there exists an algebra $^*$-automorphism $\pi_{a,\xi}$ of $B(H)$ such that $T(a) (\xi) = \pi_{a,\xi} (a) (\xi)$. Then $T$ is a unital Jordan $^*$-endomorphism of $B(H)$.$\hfill\Box$
\end{corollary}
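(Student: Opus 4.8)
The plan is to deduce Corollary \ref{c generalization of Molnar bilocal *-automorphisms} as a more-or-less immediate consequence of two ingredients already available in the excerpt: Theorem \ref{t extreme-strong-local *-automorphisms}, which tells us that the map $T$ in question is a Jordan $^*$-homomorphism of $B(H)$, and the structure theorem of Kadison for Jordan $^*$-isomorphisms between von Neumann algebras. First I would record that a bilocal $^*$-automorphism on $B(H)$ is, by the remarks preceding the corollary (and the identification $\||a|\|_{\xi\otimes\xi} = \|a(\xi)\|$ for a unit vector $\xi$), exactly an extreme-strong-local $^*$-automorphism; since $B(H)$ is an atomic von Neumann algebra, Theorem \ref{t extreme-strong-local *-automorphisms} applies directly and yields that $T$ is a Jordan $^*$-homomorphism. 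The argument preceding Theorem \ref{t strong$^*$-local *-automorphisms} also shows $T(1)=1$, so $T$ is a unital Jordan $^*$-endomorphism, which is already the desired conclusion.

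If one wishes to say more and recover the multiplicative/antimultiplicative dichotomy (as in Moln\'ar's original statement), the next step is to pass from "Jordan $^*$-homomorphism" to a decomposition. Here one has to be a little careful because $T$ need not be injective or surjective: Kadison's theorem as quoted is stated for Jordan $^*$-isomorphisms. The remedy is standard: replace the codomain by $N:=T(B(H))$, which, being the image of a von Neumann algebra under a weak$^*$-continuous Jordan $^*$-homomorphism, is a von Neumann subalgebra of $B(H)$, and consider the central projection $z$ in $B(H)$ supporting $\ker T$. Since $B(H)$ is a factor, $\ker T$ is either $\{0\}$ or all of $B(H)$; in the non-trivial case $T$ is a Jordan $^*$-isomorphism of the factor $B(H)$ onto $N$, and Kadison's theorem (in the form \cite[Theorem 10]{Kad51}, or the factor case is already classical) forces $T$ to be either an algebra $^*$-isomorphism onto $N$ or an algebra $^*$-anti-isomorphism onto $N$; composing the anti-isomorphism case with the transpose map shows $T$ is in fact a genuine $^*$-endomorphism in the infinite-dimensional separable case, recovering \cite[Theorem 1]{Mol2014}.

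I expect the only genuinely delicate point to be the bookkeeping needed to move from the abstract Jordan statement to the honest "unital Jordan $^*$-endomorphism" phrasing when $H$ is non-separable or finite dimensional, where $B(H)$ can admit proper Jordan $^*$-endomorphisms that are not algebra endomorphisms — for instance the transpose map on $M_n(\mathbb{C})$. For this reason the corollary is deliberately stated only up to "Jordan", and no further dichotomy is claimed in general: the honest content is just the chain "bilocal $^*$-automorphism $\Rightarrow$ extreme-strong-local $^*$-automorphism $\Rightarrow$ (Theorem \ref{t extreme-strong-local *-automorphisms}) Jordan $^*$-homomorphism, unital since $T(1)=1$." Everything else is a citation of Kadison's classification, so the proof is short; the main care is simply in invoking atomicity of $B(H)$ and the unitality observation from the paragraph before Theorem \ref{t strong$^*$-local *-automorphisms}.
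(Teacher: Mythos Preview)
Your proposal is correct and matches the paper's own approach: the corollary is stated with a $\Box$ and introduced as a direct consequence of Theorem \ref{t extreme-strong-local *-automorphisms} (with Kadison's result mentioned only as surrounding context), and your core chain ``bilocal $^*$-automorphism $=$ extreme-strong-local $^*$-automorphism on the atomic algebra $B(H)$ $\Rightarrow$ Jordan $^*$-homomorphism, unital since $T(1)=1$'' is exactly what the paper intends. Your additional paragraph on the Kadison decomposition is extra commentary beyond what the corollary claims, but it is accurate and consistent with the remarks the paper makes just before the statement.
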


We finish this note with a reflection on weak-local $^*$-automorphisms on $C(\Omega)$-spaces. By the Banach-Stone theorem, every $^*$-automorphism $\pi$ of $C(\Omega)$ is a composition operator of the form $\pi (f) (t) = f(\sigma(t))$ ($t\in \Omega$), where $\sigma: \Omega\to \Omega$ is a homeomorphism. Therefore, if $T: C(\Omega)\to C(\Omega)$ is a weak-local $^*$-automorphism, then for each $t\in \Omega$ and each function $f\in C(\Omega),$ there exists a homeomorphism $\sigma_{f,t}: \Omega\to \Omega$, depending on $f$ and $t$ such that $$T(f) (t) = f(\sigma_{f,t}(t))\in f(\Omega).$$ The Gleason-Kahane-\.{Z}elazko theorem (cf. \cite{Gle,KaZe}) assures that $T$ is multiplicative and a $^*$-homomorphism.

\end{document}